\documentclass[11pt]{article}

\usepackage[breaklinks, colorlinks, linkcolor=blue, citecolor=blue, urlcolor=black]{hyperref}

\usepackage{amsmath}
\usepackage{amssymb}
\usepackage{amsthm}

\usepackage[utf8]{inputenc}
\usepackage{graphicx}
\usepackage{todonotes}
\usepackage{tabularx}
\usepackage{tabularcalc}
\usepackage{booktabs}
\usepackage{multirow}
\usepackage{algorithm}
\usepackage{algorithmic}
\usepackage{subfig}
\usepackage{siunitx}
\usepackage{soul}
\usepackage{footmisc}
\usepackage{svg}
\usepackage{zibtitlepage}
\usepackage{xspace}

\newtheorem{Def}{Definition}
\newtheorem{Lem}{Lemma}
\newtheorem{Thm}[Lem]{Theorem}
\newtheorem{Cor}[Lem]{Corollary}

\newtheorem{Asm}{Assumption}

\newcommand{\ie}{i.e.,\xspace}
\newcommand{\eg}{e.g.,\xspace}

\newcommand{\wlogU}{W.l.o.g.\ }

\newcommand{\bandb}{branch-and-bound\xspace}

\newcommand{\rhs}{right-hand side\xspace}
\newcommand{\lhs}{left-hand side\xspace}

\newcommand{\NP}{\mathcal{NP}}

\newcommand{\Abs}[1]{\left| #1\right|}

\newcommand{\down}[1]{\lfloor #1\rfloor}
\newcommand{\up}[1]{\lceil #1\rceil}
\newcommand{\fpdown}[1]{\underline{#1}}
\newcommand{\fpup}[1]{\overline{#1}}

\DeclareMathOperator*{\argmin}{arg\,min}

\newcommand{\solver}[1]{\textsc{#1}\xspace}

\newcommand{\scipversion}{8.0}

\newcommand{\scip}{\solver{SCIP}}
\newcommand{\scipv}{\solver{SCIP}~\scipversion\xspace}

\newcommand{\soplexversion}{6.0.2}
\newcommand{\soplexv}{\solver{SoPlex}~\soplexversion\xspace}

\newcommand{\papiloversion}{2.0.1}
\newcommand{\papilov}{\solver{PaPILO}~\papiloversion\xspace}

\newcommand{\miplib}{\mbox{MIPLIB}}

\newcommand{\N}{\mathbb{N}\xspace}

\newcommand{\Z}{\mathbb{Z}\xspace}
\newcommand{\R}{\mathbb{R}\xspace}
\newcommand{\Q}{\mathbb{Q}\xspace}

\newcommand{\F}{\mathbb{F}\xspace}

\newcommand{\overhead}[2]{\pgfmathparse{(#1-#2)/#2 *100}\pgfmathprintnumber[precision=1]{\pgfmathresult}\%}
\newcommand{\reduction}[2]{\pgfmathparse{(#2-#1)/#2 *100}\pgfmathprintnumber[precision=1]{\pgfmathresult}\%}
\newcommand{\increase}[2]{\pgfmathparse{(#1-#2)/#2 *100}\pgfmathprintnumber[precision=1]{\pgfmathresult}\%}

\newcommand{\colorquot}[2]{
   \pgfmathparse{(#1<0.9*#2)?1:0}\ifdim\pgfmathresult pt>0pt \textcolor{blue}{$\mathbf{\reduction{#1}{#2}}$}\else
      \pgfmathparse{(#1>1.1*#2)?1:0}\ifdim\pgfmathresult pt>0pt \textcolor{red}{$\mathbf{\reduction{#1}{#2}}$} \else
         $\reduction{#1}{#2}$\fi \fi
}

\newcommand{\nocuts}{\textsc{nocuts}\xspace}
\newcommand{\safegmi}{\textsc{safegmi}\xspace}
\newcommand{\enc}{\textsc{densmall}\xspace}
\newcommand{\fpred}{\textsc{fpnocuts}\xspace}
\newcommand{\fpcuts}{\textsc{fpgmi}\xspace}

\usepackage{longtable}
\usepackage{chngcntr}

\newcommand{\feasP}{$P$\xspace}

\newlength\myindent
\setlength\myindent{2em}
\newcommand\bindent{\begingroup
   \setlength{\itemindent}{\myindent}
   \addtolength{\algorithmicindent}{\myindent}
}
\newcommand\eindent{\endgroup}

\svgpath{{pictures/}} 

\sisetup{
  group-digits = false,
  group-minimum-digits = 3,
  tight-spacing
}

\newcommand{\myorcidlink}[1]{\,\href{https://orcid.org/#1}{\raisebox{-0.45ex}{\includegraphics[width=1.8ex]{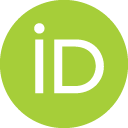}}}}

\setlength{\marginparwidth}{4.2cm}

\begin{document}

\ZTPTitle{Safe and Verified Gomory Mixed Integer Cuts in a Rational MIP Framework}
\ZTPAuthor{ \ZTPHasOrcid{Leon Eifler}{0000-0003-0245-9344} \ZTPHasOrcid{Ambros Gleixner}{0000-0003-0391-5903}
}
\ZTPNumber{ZIB-Report 23-09}
\ZTPMonth{March}
\ZTPYear{2023}

\newpage

\title{Safe and Verified Gomory Mixed Integer Cuts in a Rational MIP Framework
\thanks{The work for this article has been conducted within the Research Campus Modal funded by the German Federal Ministry of Education and Research (BMBF grant numbers 05M14ZAM, 05M20ZBM).}}
\author{Leon Eifler$^1$\myorcidlink{0000-0003-0245-9344} \and
   Ambros Gleixner$^{1,2}$\myorcidlink{0000-0003-0391-5903}
}
\date{$^1$Zuse Institute Berlin\\$^2$HTW Berlin\\[2ex]\today
}

\maketitle

\begin{abstract}
   This paper is concerned with the exact solution of mixed-integer programs (MIPs) over
   the rational numbers, \ie without any roundoff errors and error tolerances. Here, one computational bottleneck that should be avoided whenever possible is to employ large-scale symbolic computations. Instead it is often possible to use safe directed rounding methods, \eg to generate provably correct dual bounds.
   In this work, we continue to leverage this paradigm and extend an exact branch-and-bound framework by separation routines for safe cutting planes, based on the approach first introduced by Cook, Dash, Fukasawa, and Goycoolea in 2009. Constraints are aggregated safely using approximate dual multipliers from an LP solve, followed by mixed-integer rounding to generate provably valid, although slightly weaker inequalities.

   We generalize this approach to problem data that is not representable in floating-point arithmetic, add routines for controlling the encoding length of the resulting cutting planes, and
   show how these cutting planes can be verified according to the VIPR certificate standard.
   Furthermore, we analyze the performance impact of these cutting planes in the context of an exact MIP framework, showing that we can solve $21.5\%$ more instances and reduce solving times by $26.8\%$ on the \miplib~2017 benchmark test set.
\end{abstract}

\section{Introduction}

Even though the problem class of mixed-integer programs (MIPs) is $\NP$-hard \cite{Conforti2014}, state-of-the-art MIP solvers manage to solve a large number of such problems with up to millions of variables and constraints
\cite{GleixnerHendelGamrathetal.2021}.
While the core algorithm, LP-based \bandb, is straightforward, it is a long list of sophisticated solving techniques such as presolving, cutting planes, primal heuristics, conflict analysis, and branching rules that make this remarkable performance possible.

However, virtually all established solvers that contain these techniques rely on fast floating-point arithmetic, combined with numerical error tolerances to achieve a high degree of numerical stability.
For most applications, especially in industry, this is completely sufficient.
Nevertheless, some problem cases require exact proofs of optimality or infeasibility without the slight numerical inaccuracies that result from rounding errors in floating-point arithmetic.
This is the case when mixed-integer programs are used as
a tool in mathematics, \eg to computationally investigate open conjectures.
Recent examples of such approaches include
\cite{Bofi19,Burt12,EiflerGleixnerPulaj2022,KenterEtAl2018,LanciaEtAl2020,Pula20}.

Examples for industrial applications where the correctness of results is
paramount are, \eg hardware verification~\cite{Achterberg2007}, compiler
optimization~\cite{WilkenEtAl2000}, or more recently infeasibility analysis in hydro unit commitment~\cite{SahraouiBendottiAmbrosio2019}.

To the best of our knowledge, the first fully general, exact MIP solver that can handle MIPs with rational input data is presented by Cook, Koch, Steffy, and Wolter~\cite{CookKochSteffyetal.2013}.
Their hybrid-precision framework uses both symbolic as well as numeric computations
and applies different dual bounding methods~\cite{Espinoza2006,Neumaier02safebounds,SteffyWolter2013} to generate provably valid dual bounds.
However, besides employing reliability pseudocost branching, this framework still lacks the previously mentioned advanced solving techniques since their application in the roundoff-error free setting is often not trivial.

While a direct translation of methods using symbolic computations is always possible, it is often prohibitively slow in practice. Safe methods that try to avoid symbolic computations in favour of safe rounding techniques can often provide better results. A first step in the direction of closing this algorithmic gap between exact and inexact MIP was established \cite{EiflerGleixner2022} by revising the approach of  Cook et al.~\cite{CookKochSteffyetal.2013} and extending it by symbolic presolving routines, as well as a
repair step that enables the usage of all existing primal heuristics \cite{EiflerGleixner2022}.

One key feature missing to date is a separation routine for cutting planes, which is known to be among
the most important components to make MIP solvers perform well in practice, as reported, \eg by Achterberg and Wunderling \cite{AchterbergWunderling2013}.
Among different types of cutting planes, they found that especially mixed-integer rounding (MIR) cuts ($48\%$ speedup), as well as Gomory cuts ($28\%$ speedup), seem to provide the most benefit.
However, designing an efficient roundoff-error-free separation procedure for Gomory cuts is non-trivial since a purely symbolic approach would require an exact LP tableau row. To compute the tableau, the LP relaxation needs to be solved exactly and LP rows need to be aggregated, both of which are computationally expensive operations.
Instead, Cook, Dash, Fukasawa, and Goycoolea \cite{CookDashFukasawaGoycoolea2009} introduced a technique that can be used to construct numerically safe cuts, using safe rounding techniques. These cuts are guaranteed to be exactly feasible, without any symbolic computations, but at the cost of slightly weaker efficacy.
While the focus of Cook et al. was to provide safe cutting planes within a normal floating-point MIP solver, the technique can be generalized to the exact MIP setting.

This generalization is our first contribution. We show how to relax rational problem data to be usable for the safe rounding technique and generalize the approach of \cite{CookDashFukasawaGoycoolea2009} to allow for negative variable bounds. Our second contribution is adapted scaling and a new post-processing technique that improves the performance of safe cutting planes in the exact MIP framework. 
Finally, our third major contribution is the independent verification of these cutting planes using the VIPR \cite{VIPR} certificate format.

To motivate the last point further, note that although the exact MIP framework guarantees an exactly optimal solution \emph{in theory}, the implemented computer program is very complex and a user might not have full confidence that the result is correct \emph{in practice}.
Our goal is for the solving algorithm to provide a certificate of optimality for each instance it solves which can be verified independently. Such proof-logging features are standard in the world of satisfiability solvers~\cite{HeuleHuntWetzler2013, HeuleHuntWetzler2014} and have also been adapted to pseudo-Boolean~\cite{ElffersGochtMcCreeshNordstrom_2020, stephan_gocht_2019_3548582} and maximum satisfiability solvers~\cite{BergBogaertsNordstromOertelVandesande2023}. In the context of {integer programming, certifaction has been employed in the past for the travelling salesman problem \cite{Applegate2009}, to verify an optimal tour with 85,900 cities by a problem-specific branch-and-cut certificate. For general purpose exact MIP,} the VIPR certificate standard \cite{VIPR} is a possibility to encode a proof for the optimality of a \bandb tree, using only elementary branching logic and basic arithmetic. We introduce an explicit verification of MIR cuts using only the logic of VIPR, as well as an algorithm to account for the rounding errors introduced through the safe rounding procedure.

The overall goal of this research is twofold. First, we aim to further reduce the algorithmic gap between exact and floating-point MIP solvers to make exact MIP solving a real alternative for researchers going forward. Second, we aim to better understand the practical differences between numerically safe cutting planes in exact MIP and normal ones in conventional floating-point MIP, especially the impact they have on LP solving behaviour. We conduct a detailed computational study, investigating both of these points.

The paper is structured as follows. In Section~\ref{sec:safe-cuts}, we present the construction of numerically safe Gomory mixed-integer cuts in an exact MIP setting, discuss scaling, and introduce a new post-processing 
routine that controls the encoding length of cut coefficients. In Section~\ref{sec:veri-cuts}, we present verification routines for MIR cuts and a completion algorithm that accounts for the rounding errors introduced during safe rounding. Section~\ref{sec:comp} constitutes our computational study, where the performance of all new features is evaluated and compared against an analogue experiment within a floating-point solver. We conclude with remarks on further research directions in Section~\ref{sec:conc}.

All the code for completing and checking the certificates is freely available on GitHub \cite{VIPRweb}.

\section{Safe Gomory mixed-integer cuts for exact MIP}
\label{sec:safe-cuts}

We consider the general rational mixed-integer program (MIP) of the form

\[
   \min\{\, c^Tx : Ax \leq b,\; \ell \leq x \leq u,\text{ and } x_j\in\Z \text{ for all } j \in I \,\}
\]
where $A \in \Q^{m \times n}, c,\ell,u \in \Q^n, b \in \Q^m$ and $I \subseteq \{1,\ldots,n\}$. We denote the set of all feasible points for this MIP by \feasP.
The variable bounds are allowed to be $\pm\infty$, but we assume throughout this paper that all variables appearing in a cut have either a finite lower or a finite upper bound:
\begin{Asm}
   \label{asm:bounds}
   For any valid inequality $a^Tx \le b$ encountered during the construction of a cut, if $a_i \neq 0$ then either $u_i < \infty$ or $\ell_i > -\infty$ holds. Thus we can choose sets $U,L$ such that $u_i < \infty$ for all $i \in U$, $\ell_i > -\infty$ for all $i \in L$, and $a_i = 0$ for all $i \notin U \cup L$.
\end{Asm}

{Note that this assumption can always be fulfilled by splitting variables with infinite bounds into their positive and negative part; however, it is necessary to introduce new auxiliary variables explicitly for these parts. In practice, after presolving, Assumption~\ref{asm:bounds} usually seems to hold and we did not encounter any problems where cuts needed to be discarded in our experiments.}

\noindent
To give an overview, our steps for generating safe MIR cuts in general and safe
Gomory mixed-integer cuts in particular are as follows:
\begin{enumerate}
   \item \emph{Approximate} rows by a floating-point representable relaxation.
   \item Generate a valid, floating-point representable inequality for \feasP using \emph{safe aggregation} of rows.
   \item Construct the MIR cut:
         \begin{enumerate}
            \item Safely \emph{transform} this inequality into non-negative variable space.
            \item Apply the \emph{safe MIR technique} to the transformed inequality and retransform to original variable space.
            \item \emph{Substitute} slack variables.
         \end{enumerate}
   \item Post-process the cut:
         \begin{enumerate}
            \item \emph{Scale} the resulting cut to improve numerical stability and possibly make coefficients integer.
            \item Control the \emph{encoding length} of the generated cut.
         \end{enumerate}
\end{enumerate}
Steps 2 and 3 are in essence the procedure described in \cite{CookDashFukasawaGoycoolea2009},
although slight differences exist because we allow for variables with negative lower bounds in $\ell$.
In the following sections, we describe all steps in more detail.

\subsection{Safe MIR cuts by directed rounding}

Let $\alpha^Tx \le \beta$ be a valid inequality for \feasP. If all integer variables are non-negative, then the mixed-integer rounding cut
\begin{equation}
   \label{eq:mircut1}
   \sum_{i \in I} \Big( \down{\alpha_i} + \frac{(f_i -f)^+}{1-f} \Big) x_i + \sum_{i \notin I, a_i < 0} \frac{\alpha_i
   }{1-f}x_i\le \down{\beta}
\end{equation}
with $f = \beta - \down{\beta}$ and $f_i = \alpha_i - \down{\alpha_i}, i \in I$, is valid for \feasP~\cite{MarchandWolsey1998}.
If not all integer variables are non-negative but Assumption~\ref{asm:bounds} holds, then we can transform all integer variables to a non-negative space via
\begin{align}
   \label{eq:trans}
   x_i' := \begin{cases}
              u_i - x_i \text{ for all } i \in U \cap I, \\
              x_i - \ell_i \ \text{ for all } i \in L \cap I.
           \end{cases}
\end{align}
Gomory mixed-integer (GMI) cuts are obtained by applying the MIR technique to one row of an optimal simplex tableau that corresponds to an integer variable with a fractional LP solution value.

The na\"ive approach to computing exact GMI cuts for \feasP by roundoff-error-free rational arithmetic can become prohibitively expensive not only due to the operations involved in \eqref{eq:mircut1}.
Most of all, obtaining $\alpha^Tx \le \beta$ as an exact tableau row requires a rational LU factorization of the basis matrix.
In recent approaches to solve MIPs exactly, however, only an approximate floating-point LP solution is at hand \cite{EiflerGleixner2022}.

Instead of performing all computations in exact arithmetic, a better option is to construct numerically safe cuts as proposed by \cite{CookDashFukasawaGoycoolea2009}. By using safe directed rounding, valid inequalities can be generated that are guaranteed to be exactly feasible for \feasP, without symbolic computations, though at the cost of obtaining slightly weaker cuts.
Hence, let us first define the notation used for safe rounding that will be needed throughout this paper.

Let $\F \subseteq \Q$ denote the set of floating-point numbers. In practice, these will usually be standard IEEE double-precision numbers with $11$~bits for the exponent and $52$~bits for the mantissa.
{This means any number $f \in \F$ can be written as $f = s * m * 2^{e}$, where the sign is $s \in \{-1,1\}$, the mantissa $m=(1,M)_2$ is a number between $1$ and $2$, {encoded in binary format} with $52$ bits, and the exponent $e$ is a binary number with up to $11$ bits, centered around $0$, \ie $e \in [-1022,1023]$. For a detailed description of the format, see \eg \cite{Orton2001}. It is clear that not all rational numbers can be exactly represented in this format and that $\F$ is not closed under addition, subtraction, multiplication, or division. Hence, the result of these operations is rounded up or down to a nearest representable number; the rounding direction can be controlled in computer code.}

\begin{Def}
   Let $x \in \Q$ be a rational number. We denote the closest upper and lower floating-point representable approximations of $x$ in $\F$ by
   \[
      \fpup{x} := \min\{ y \in\F : y \ge x \}
      \qquad\text{and}\qquad
      \fpdown{x} := \max\{ y \in\F : y \le x \},
   \]
   respectively.  We call $x$ \emph{$\F$-representable} if $x\in\F$.
   We call an inequality $a^Tx \le b$ \emph{$\F$-representable} if $a \in \F^n$ and $b \in \F$.
   Finally, for $n \ge 2$ and $\lambda_1,\ldots,\lambda_n\in\F$, we define recursively
   \begin{align*}
      \fpup{\sum_{i=1}^n \lambda_i} := \fpup{\lambda_1  + \fpup{\sum_{i=2}^n \lambda_i}}
      \qquad\text{and}\qquad
      \fpdown{\sum_{i=1}^n \lambda_i} := \fpdown{\lambda_1  + \fpdown{\sum_{i=2}^n \lambda_i}}.
   \end{align*}
\end{Def}
{We handle other arithmetic operations, \ie subtraction, multiplication, and division, as well as combinations thereof, analogously. We note that this is consistent with how modern computers handle floating-point numbers. Specifically, computers have the following rounding modes: down, up, towards zero, and to nearest. These can be changed by the algorithm to ensure the desired over- or underestimation.}

Note that this definition means that no rational computations are necessary to compute $\fpup{\sum_{i=1}^n \lambda_i}$. On the other hand, the order of summation changes the result since we round after each pairwise addition.

\subsection{Approximating problem data}

Some coefficients in rows of the original problem formulation may not be in $\F$.
Affected inequalities can be made $\F$-representable by relaxing them slightly
according to the following formula, which relies on Assumption~\ref{asm:bounds}.

\begin{Lem}
   \label{lem:fprep}
   Let $a^Tx \le b$, with $a \in \Q^n, b \in \Q$ be valid for \feasP. Then the $\F$-representable inequality
   \begin{align*}
      \sum_{i \in U} \fpup{a_i} x_i + \sum_{i \in L} \fpdown{a_i} x_i \le \fpup{b + \sum_{{i \in U}} (\fpup{a_i}-\fpdown{a_i}){\fpup{u_i}} + \sum_{{i \in L}}(\fpdown{a_i}-\fpup{a_i}){\fpdown{\ell_i}}}
   \end{align*}
   is also valid for \feasP.
\end{Lem}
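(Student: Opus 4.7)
The plan is to write the target left-hand side as $a^{T}x$ plus an error term, bound that error term over $x \in P$ using the available variable bounds, and then absorb the slack through a final safe round-up on the right-hand side.

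First I would expand
\[
   \sum_{i \in U} \fpup{a_i} x_i + \sum_{i \in L} \fpdown{a_i} x_i \;=\; a^{T}x \;+\; \sum_{i \in U} (\fpup{a_i} - a_i) x_i \;+\; \sum_{i \in L} (\fpdown{a_i} - a_i) x_i,
\]
using $a_i = 0$ for $i \notin U \cup L$ from Assumption~\ref{asm:bounds}. Since $a^{T}x \le b$ on \feasP, it suffices to show that the two error sums are bounded by the correction terms appearing inside the outer $\fpup{\cdot}$.

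Next I would estimate each summand separately by a case distinction on the sign of the relevant bound. For $i \in U$, I have $\fpup{a_i} - a_i \ge 0$ and $x_i \le u_i$. If $u_i > 0$, then either $x_i \le 0$ (trivial) or $0 \le x_i \le u_i$ gives $(\fpup{a_i} - a_i) x_i \le (\fpup{a_i} - a_i) u_i \le (\fpup{a_i} - \fpdown{a_i}) u_i$, using $a_i \ge \fpdown{a_i}$ and $u_i > 0$. If instead $u_i \le 0$, then $x_i \le u_i \le 0$ and $(\fpup{a_i} - a_i) x_i \le 0$, so no contribution is needed. Symmetrically, for $i \in L$, I have $\fpdown{a_i} - a_i \le 0$ and $x_i \ge \ell_i$. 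If $\ell_i \ge 0$, the summand is nonpositive; if $\ell_i < 0$, then the worst case is $x_i = \ell_i$, yielding $(\fpdown{a_i} - a_i) x_i \le (\fpdown{a_i} - a_i) \ell_i \le (\fpdown{a_i} - \fpup{a_i}) \ell_i$, where the last step uses $a_i \le \fpup{a_i}$ combined with $\ell_i < 0$ (which flips the inequality). Summing these pointwise bounds gives
\[
   \sum_{i \in U} \fpup{a_i} x_i + \sum_{i \in L} \fpdown{a_i} x_i \;\le\; b + \!\!\sum_{i \in U,\, u_i > 0}\!\!(\fpup{a_i}-\fpdown{a_i})u_i + \!\!\sum_{i \in L,\, \ell_i < 0}\!\!(\fpdown{a_i}-\fpup{a_i})\ell_i
\]
for every $x \in \feasP$.

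Finally, I would apply $\fpup{\cdot}$ to the right-hand side. Because $\fpup{y} \ge y$ for every $y \in \Q$ by definition, this only weakens the inequality, so validity for \feasP is preserved, while the coefficients $\fpup{a_i}$, $\fpdown{a_i}$ on the left-hand side are in $\F$ by construction. This makes the resulting inequality $\F$-representable as claimed. The only delicate point, and the one I would flag as the main pitfall, is tracking the sign conventions correctly in the two case distinctions: the indices in the correction sums are restricted precisely to $u_i > 0$ and $\ell_i < 0$, and it is essential that the bounds on $x_i$ combined with the signs of $\fpup{a_i} - a_i$ and $\fpdown{a_i} - a_i$ line up so that each ignored case contributes at most zero.
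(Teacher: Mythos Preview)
Your argument is correct: the error-term decomposition together with the sign-based case analysis on $u_i$ and $\ell_i$ does exactly what is needed, and the final round-up on the right-hand side is justified as you say.

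The paper, however, argues differently. It performs an explicit change of variables to non-negative space via $x_i' = u_i - x_i$ for $i \in U$ and $x_i' = x_i - \ell_i$ for $i \in L$, moves the resulting constants to the right-hand side, and then uses the single observation that for $x_i' \ge 0$ one may safely round down every left-hand coefficient. Substituting back and rounding up the right-hand side yields the claim. Your proof stays in the original variable space and instead bounds the perturbation $(\fpup{a_i}-a_i)x_i$ or $(\fpdown{a_i}-a_i)x_i$ termwise via a case split on the sign of $u_i$ or $\ell_i$. Both routes are short; the paper's version has the advantage that it isolates a reusable template (transform to non-negative space, round, transform back) that is invoked verbatim for the later results on safe aggregation, safe MIR, and safe scaling, whereas your direct error bound is more self-contained but would have to be redone for each of those statements.
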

\begin{proof}
   The first step is to transform all variables into a non-negative form. Define
   $x_i'$ according to \eqref{eq:trans} for all $i \in I$.
   Substituting $x$ for $x'$ yields
   \begin{align*}
      \sum_{i \in U}a_i(u_i-x_i')+\sum_{i \in L}a_i(\ell_i+x_i') & \le b.
   \end{align*}
   Moving all constants to the \rhs
   \begin{align*}
      \sum_{i \in U}-a_ix_i'+\sum_{i \in L}a_ix_i' & \le b - \sum_{i \in U} a_iu_i - \sum_{i \in L}a_i\ell_i.
   \end{align*}
   Since $x_i' \ge 0$, we can round down all coefficients on the \lhs.
   With $\fpdown{-a_i}=-\fpup{a_i}$, we get
   \begin{align*}
      \sum_{i \in U}-\fpup{a_i}x_i'+\sum_{i \in L}\fpdown{a_i}x_i' & \le b - \sum_{i \in U} a_iu_i - \sum_{i \in L}a_i\ell_i.
   \end{align*}
   Substituting $x$ for $x'$ yields
   \begin{align*}
      \sum_{i \in U} \fpup{a_i} x_i + \sum_{i \in L} \fpdown{a_i} x_i \le {b + \sum_{i \in U} (\fpup{a_i}-a_i)u_i + \sum_{i \in L}(\fpdown{a_i}-a_i)\ell_i}.
   \end{align*}
   Finally, rounding the whole \rhs upwards, we get the final inequality.
\end{proof}

This procedure of implicitly transforming to non-negative variable space, rounding in the correct direction, retransforming, and finally rounding again is the essential technique from \cite{CookDashFukasawaGoycoolea2009} that is used for all the safe operations in this manuscript. {Since the correction factor on the right-hand side of the inequalities always has non-negative coefficients for $i \in U$ and non-positive coefficients for $i \in L$, we always use $\fpup{u}$ or $\fpdown{\ell}$, if the bounds are not $\F$-representable. To avoid notational clutter, we will omit the bars on the bounds for the remainder of the manuscript.}

\subsection{Safe aggregation of rows}
The technique from the previous Lemma can also be applied to aggregate two $\F$-representable rows safely.

\begin{Cor}
   \label{cor:agg}
   Let $a^Tx \le b$ and $c^Tx \le d$ be two valid, $\F$-representable inequalities, and $0 < \lambda \in \F$.
   If Assumption~\ref{asm:bounds} holds, we can generate an $\F$-representable, valid approximation of the aggregated inequality $(a+\lambda c)^T x \le b + \lambda d$ by
   \begin{equation*}
      \sum_{i \in U}\overline{\alpha_i}x_i+\sum_{i \in L}\underline{\alpha_i}x_i \le \fpup{b+\lambda d + \sum_{i \in U, u_i > 0} (\overline{\alpha_i}-\underline{\alpha_i})u_i + \sum_{i \in L, \ell_i < 0}(\underline{\alpha_i}-\overline{\alpha_i})\ell_i},
   \end{equation*}
   with $\alpha_i := a_i + c_i\lambda$.
\end{Cor}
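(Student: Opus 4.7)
The plan is to reduce the corollary directly to Lemma~\ref{lem:fprep} rather than redoing the transform/retransform bookkeeping. I would start by noting that exact validity of the aggregated inequality is immediate: for any $x \in P$, since $\lambda > 0$ and both $a^Tx \le b$ and $c^Tx \le d$ hold, positive linear combination gives $(a + \lambda c)^T x \le b + \lambda d$, which in the notation of the statement is exactly $\alpha^T x \le b + \lambda d$ with $\alpha_i = a_i + \lambda c_i \in \Q$.

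Next I would apply Lemma~\ref{lem:fprep} to this valid rational inequality $\alpha^T x \le b + \lambda d$. The lemma's hypothesis only requires rational data and Assumption~\ref{asm:bounds}, both of which are available: the assumption is inherited because the index sets $U, L$ in the statement are exactly the ones attached to the aggregated inequality, and rationality follows since $a, c \in \F^n \subseteq \Q^n$ and $\lambda \in \F \subseteq \Q$. The lemma then produces the $\F$-representable relaxation
\begin{equation*}
   \sum_{i \in U} \fpup{\alpha_i} x_i + \sum_{i \in L} \fpdown{\alpha_i} x_i \le \fpup{(b + \lambda d) + \sum_{i \in U, u_i > 0} (\fpup{\alpha_i} - \fpdown{\alpha_i}) u_i + \sum_{i \in L, \ell_i < 0} (\fpdown{\alpha_i} - \fpup{\alpha_i}) \ell_i},
\end{equation*}
which, identifying $\overline{\alpha_i} = \fpup{\alpha_i}$ and $\underline{\alpha_i} = \fpdown{\alpha_i}$, is term-for-term the inequality claimed in the corollary.

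The only mildly delicate point is interpreting the outer $\fpup{\cdot}$, since the inner summand $b + \lambda d$ is rational but not necessarily in $\F$. I would address this by reading $\fpup{\cdot}$ in its general rational sense (as introduced in the definition of safe rounding) rather than via the recursive pairwise rule, or equivalently by folding $b + \lambda d$ into the outer rounding as a single rational quantity before the final $\fpup{\cdot}$. Either way, the outer rounding can only weaken the right-hand side, so exact feasibility for $P$ is preserved, and $\F$-representability of the resulting coefficients and right-hand side is immediate from the definitions. Thus the corollary follows as a direct specialization of Lemma~\ref{lem:fprep}; no additional estimates or case distinctions beyond what the lemma already handles are required.
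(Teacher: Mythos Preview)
Your proposal is correct and essentially matches the paper's own proof, which simply states that ``the proof follows the same steps as in Lemma~\ref{lem:fprep}.'' You apply the lemma as a black box to the valid rational inequality $\alpha^Tx \le b+\lambda d$ instead of re-running the transform/retransform argument, but this is only a cosmetic difference in presentation, not in substance.
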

\begin{proof}
   The proof follows the same steps as in Lemma~\ref{lem:fprep}.
\end{proof}

The rows of an optimal simplex tableau, which form the base inequalities
$\alpha^T x \leq \beta$ to derive GMI cuts via the MIR
formula~\eqref{eq:mircut1}, can be alternatively obtained as an aggregation of
inequalities from the LP relaxation.
As a matter of fact, this is also how GMI cuts are computed in floating-point MIP
solvers to minimize numerical errors.
Similarly, we can perform safe aggregation of problem inequalities according to Corollary~\ref{cor:agg}, using multipliers from an \emph{approximate} row of the basis inverse to generate a valid, $\F$-representable base inequality $\alpha^T x \leq \beta$.
This is hopefully a good approximation to the exact simplex tableau row, at
least if the approximate floating-point LP solution from which we obtain the
dual multipliers is a good approximation to the optimum of the exact, rational
LP relaxation.

\subsection{Constructing the MIR cut}
\label{subsec:mir}

After obtaining such a row $\alpha^Tx \le \beta$, we proceed as we normally would in deriving a MIR cutting plane. In the following, we go through those steps again to highlight at which points operations have to be performed in a numerically safe fashion.

First, for each variable one of the bounds is chosen to transform to non-negative variable space, since the MIR formula requires non-negative variables. If the variable has both finite upper and lower bound, we use the bound that is closest to the value of the current LP solution for that variable. Denote by $U$ the index set of variables for which the upper bound is chosen, by $L$ the set for which the lower bound is chosen, and by $x_i'$ the transformed variables according to \eqref{eq:trans}.

We first obtain the {aggregated} inequality in transformed space
\begin{align}
   \sum_{i \in U} -\alpha_i x_i' + \sum_{i \in L} \alpha_i x_i' \le \overline{\beta - \sum_{i \in U}\alpha_iu_i-\sum_{i \in L} \alpha_i \ell_i} := d.
\end{align}
Next, we compute the coefficients in the cut, according to {\eqref{eq:mircut1}}. Denote the transformed coefficients by
\begin{align}
   \alpha_i' = \begin{cases}
                  \alpha_i & \text{ for } i \in L \\ -\alpha_i &\text{ for } i \in U,
               \end{cases}
\end{align}
and the fractionalities by $f = d - \down{d}$, $f_i = \alpha_i' - \down{\alpha_i'}$. Then the safe MIR cut in the transformed space is

\begin{equation}
   \sum_{i \in I} \underline{\Big( \down{\alpha_i'} + \frac{(f_i -f)^+}{1-f} \Big)} \,x_i' + \sum_{i \notin I:\alpha_i' < 0} \underline{\Big( \frac{\alpha_i'
      }{1-f}\Big)}\,x_i'\le \down{d}.
\end{equation}
Transforming back to original variable space as in Lemma~\ref{lem:fprep} yields the final safe MIR cut.

We give some additional, slightly technical details that will be especially relevant for the verification described in Section~\ref{sec:veri-cuts}.
Whenever we consider a row \( a_i^T x \leq b_i \) for aggregation, we implicitly turn it into an equation using a slack variable, \ie \( a_i^T x+s_i=b_i, s_i \ge 0 \). This allows two generalizations. Firstly, we can aggregate with a multiplier of any sign. If we use a negative multiplier $\lambda_i$ {for a row without additional integrality information}, the slack is treated as a continuous variable and thus gets assigned the coefficient \( \frac{\lambda_i}{(1-f)} \), where $f$ is the fractionality of the one-row relaxation's \rhs. Secondly, and more importantly, if we know that all non-zero coefficients as well as their corresponding variables are integer, then $s$ can be treated as an integer variable for the MIR procedure and gets assigned the coefficient \( \lfloor\lambda_i\rfloor+\frac{\left(f_i-f\right)^+}{(1-f)} \), {regardless of the sign of $\lambda$}.

At the very end, the slack is eliminated using its definition \( s_i=b_i-a_i^{\top} x \) to return to the space of original variables. Since $s_i$ is by definition non-negative, it is straightforward to do this safely using directed rounding. We simply need to underapproximate the coefficient and perform the back-substitution exactly as an aggregation according to Corollary~\ref{cor:agg}.

\subsection{Post-processing steps}

The steps outlined in the previous section yield a new valid, $\F$-representable inequality $\alpha^Tx \le \beta$. We present two post-processing ideas that aim to improve LP performance after adding such inequalities.

\subsubsection{Scaling}

An important aspect to improve the numerical stability of {cutting plane algorithms}, both in the exact and in the inexact setting, is scaling. Large coefficient ranges in the problem are known to detriment the performance and accuracy of LP solvers.
This is especially relevant for the exact MIP setting.
Given a cut $\alpha^Tx \le \beta$ and a scaling coefficient $s \ge 0$, we can safely scale the cut using the same approach as in the previous subsection.
\begin{Lem}
   Given a valid
   cut $\alpha^Tx \le \beta$ and a scaling factor $s \ge 0$, the safely scaled cut
   \begin{equation*}
      \sum_{i \in U} \fpup{s \alpha_i} x_i + \sum_{i \in L} \fpdown{s \alpha_i} x_i \le \fpup{s \beta + \sum_{i \in U, u_i > 0} (\fpup{s \alpha_i}-\fpdown{s \alpha_i})u_i + \sum_{i \in L, \ell_i < 0}(\fpdown{s \alpha_i}-\fpup{s \alpha_i})\ell_i}
   \end{equation*}
   is valid and $\F$-representable.
\end{Lem}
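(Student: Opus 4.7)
The plan is to reduce the claim directly to Lemma~\ref{lem:fprep}, applied to the (possibly not $\F$-representable) rationally scaled inequality.

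First, I would observe that since $s \ge 0$ and $\alpha^Tx \le \beta$ is valid for \feasP, multiplying through by $s$ preserves the direction of the inequality, so $(s\alpha)^T x \le s\beta$ is also valid for \feasP. Its coefficients $s\alpha_i$ and right-hand side $s\beta$ lie in $\Q$ but need not lie in $\F$, since floating-point multiplication is not exact in general.

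Next, I would invoke Lemma~\ref{lem:fprep} with $a := s\alpha$ and $b := s\beta$. Before doing so I would briefly check that Assumption~\ref{asm:bounds} still holds for the scaled inequality: if $s > 0$, then $s\alpha_i \ne 0$ iff $\alpha_i \ne 0$, so the support and hence the admissible choice of index sets $U$ and $L$ are inherited from the original cut; if $s = 0$, the inequality degenerates to $0 \le 0$ which is trivially valid and $\F$-representable, so nothing further needs to be shown. Plugging $a = s\alpha$, $b = s\beta$ into the conclusion of Lemma~\ref{lem:fprep} produces exactly the inequality
\[
\sum_{i \in U} \fpup{s\alpha_i}\, x_i + \sum_{i \in L} \fpdown{s\alpha_i}\, x_i \le \fpup{s\beta + \sum_{i \in U,\, u_i > 0}(\fpup{s\alpha_i}-\fpdown{s\alpha_i})u_i + \sum_{i \in L,\, \ell_i < 0}(\fpdown{s\alpha_i}-\fpup{s\alpha_i})\ell_i},
\]
which is the stated safely scaled cut. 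By construction all coefficients and the right-hand side are the results of directed rounding operations, hence $\F$-representable, and validity is inherited from Lemma~\ref{lem:fprep}.

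There is really no substantial obstacle here; the only subtlety worth flagging is that the directed rounding of the products $s\alpha_i$ is absorbed into the $\fpup{\cdot}$ and $\fpdown{\cdot}$ operators, so no separate treatment of the (possibly non-$\F$-representable) intermediate quantities $s\alpha_i$ and $s\beta$ is needed—Lemma~\ref{lem:fprep} already covers the general rational case.
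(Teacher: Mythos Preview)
Your proposal is correct and matches the paper's own proof, which simply states ``Analogous to Lemma~\ref{lem:fprep}.'' Your reduction---observing that $s\ge 0$ preserves validity of the scaled inequality and then invoking Lemma~\ref{lem:fprep} with $a=s\alpha$, $b=s\beta$---is exactly what is intended, and your additional remarks on the $s=0$ case and on Assumption~\ref{asm:bounds} being inherited are welcome clarifications.
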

\begin{proof}
   Analogous to Lemma~\ref{lem:fprep}.
\end{proof}

\noindent
We employ two different scaling approaches, similar to the ones that are used by the corresponding floating-point algorithm in \scip. If continuous variables appear in the cut, we simply scale to \emph{equilibrium}, meaning we scale such that the largest absolute value of any coefficient becomes $1$.

If the cut contains only integer variables, we attempt to find a scaling factor that makes all coefficients close to integer. This approach uses the euclidean algorithm to compute a rational approximation
and then multiplies by the {least} common multiple ({LCM}) of all {denominators}. We impose a limit of $10^{-6}$ on the error in the rational representation and of $10^5$ on the size of the {LCM}. If this proves successful, we round the coefficients to the nearest integer and offset the \rhs to account for the difference. Consequently, we can round down the \rhs to strengthen the cut. If some of the working limits are exceeded and the approach fails, we revert to scaling to equilibrium. This is the same approach that is used in presolving for linear constraints and is detailed in \cite[Algorithm 10.2.4]{Achterberg2007}.

\subsubsection{Controlling the encoding length}
\label{sec:enc}

If scaling to integer values was not possible or not successful, {the cut was scaled to equilibrium, but the coefficients are not all integer valued. In that case, the scaled} coefficients $\alpha_i$ will often have large encoding lengths when represented as a rational number.

This is not an issue when the inequality is added to the floating-point relaxation of the LP. However, we sometimes need to (or want to) solve the rational LP relaxation of \feasP exactly, \ie using an exact LP solver. Our experiments in Section~\ref{sec:comp} show that the exact LP solver may struggle with LPs that contain cuts with coefficients of large encoding length, in some cases causing large spikes in LP solving times.

This empirical observation also has a theoretical counterpart.
The convergence analysis of the LP iterative refinement algorithm for solving
rational LPs exactly is related to the smallest possible violation of any basic solution to a rational LP.
In turn, this smallest possible violation is linked to the encoding length of
the whole problem \cite[Lemma 5]{GleixnerSteffy2020}.
This analysis suggests investigating whether the practical performance of the
iterative refinement algorithm can be improved by decreasing the encoding
length of the generated cuts.

Given a limit $M > 0$ on the size of denominators allowed in the cut, we wish to compute a relaxation $\sum_{i=1}^n\frac{n_i}{d_i}x_i \le \frac{n_0}{d_0}$ such that $ d_i, d_0 \le M$.
{Assume that we know an algorithm that can compute for any given rational number $r$ an approximation~$\frac{n}{d}$.
  Additionally, we can require either $r \le \frac{n}{d}$ or $r \ge \frac{n}{d}$, but this may decrease the quality of the approximation.
Then, for each $\alpha_i \neq 0$ in the cut we have three possible cases: 
If the corresponding variable $x_i$ is bounded only from below, \ie $x_i \ge \ell_i > -\infty$, we compute an under-approximation  $\frac{ n_i}{ d_i} \le \alpha_i$, with $ d_i \le M$. Then, we can replace $\alpha_i$ by $\frac{ n_i}{ d_i}$, as long as we add $(\frac{ n_i}{ d_i}-\alpha_i)\cdot \ell_i$ to the \rhs.
If the corresponding variable $x_i$ is bounded only from above, \ie $x_i \le u_i < \infty$, we compute an over-approximation $\frac{ n_i}{ d_i} \ge \alpha_i$, with $ d_i \le M$. Again, we replace $\alpha_i$ by $\frac{ n_i}{ d_i}$, while adding $(\frac{ n_i}{ d_i}-\alpha_i)\cdot u_i$ to the \rhs.
Finally, if the corresponding variable $x_i$ is bounded from both sides, we compute an approximation $\frac{ n_i}{ d_i} \approx \alpha_i$, with $ d_i \le M$. Then, depending on wheter $\frac{ n_i}{ d_i} \le \alpha_i$ or $\frac{ n_i}{ d_i} > \alpha_i$, we need to correct the \rhs using the lower or upper bound, respectively.}
{After performing this approximation step for each coefficient, we also compute an approximation of the \rhs. Pseudocode for this procedure is presented in Algorithm~\ref{alg:contfrac-comp}.}

\newenvironment{algocolor}{\setlength{\parindent}{0pt}
   \itshape
   \color{blue}
}{}

\begin{algorithm}
   \caption{Compute relaxation of a cut with bounded denominators}
   \label{alg:contfrac-comp}
   \begin{algorithmic}
      \STATE{\textbf{Input}: A cut $\alpha^Tx \le \beta$ over variables $x_i\in[\ell_i,u_i]$ and $M \in \Z^+$}.
      \STATE{\textbf{Output}: A relaxed cut $\sum_{i=1}^n\frac{ n_i}{ d_i}x_i \le \frac{n_0}{d_0}$ such that $ d_i, d_0 \le M$}
      \bindent
      \FOR{$i=1,\ldots,n$ with $\alpha_i \notin \Z$}
         \IF{$u_i < \infty$ and $\ell_i > -\infty$}
            \STATE{Compute approximation $\frac{ n_i}{ d_i}$ from $\alpha_i$, with $ d_i \le M$}
            \IF{$\frac{ n_i}{ d_i} \le \alpha_i$}
               \STATE{$\beta \gets \beta + (\frac{ n_i}{ d_i}-\alpha_i)\cdot \ell_i$}
            \ELSE
               \STATE{$\beta \gets \beta + (\frac{ n_i}{ d_i}-\alpha_i)\cdot u_i$}
            \ENDIF
         \ELSIF{$u_i < \infty$}
            \STATE{Compute approximation $\frac{ n_i}{ d_i}$ from $\alpha_i$ with $\alpha_i \le \frac{ n_i}{ d_i}$, and $ d_i \le M$}
            \STATE{$\beta \gets \beta + (\frac{ n_i}{ d_i}-\alpha_i)\cdot u_i$}
         \ELSIF{$\ell_i > -\infty$}
            \STATE{Compute approximation $\frac{ n_i}{ d_i}$ from $\alpha_i$ with $\alpha_i \ge \frac{ n_i}{ d_i}$, and $ d_i \le M$}
            \STATE{$\beta \gets \beta + (\frac{ n_i}{ d_i}-\alpha_i)\cdot \ell_i$}
         \ENDIF
      \ENDFOR
      \STATE{Compute approximation $\frac{n_0}{d_0}$ from $\beta$ such that $\beta \le \frac{n_0}{d_0}$, and $d_0 \le M$}
      \STATE{\textbf{Return}  $\sum_{i=1}^n\frac{ n_i}{ d_i}x_i \le \frac{\beta_n}{\beta_d}$}
      \eindent
   \end{algorithmic}
\end{algorithm} 
{In the following, we describe the} algorithm to compute these approximations of the coefficients and \rhs, which is based on \emph{continued fraction approximations} (see, \eg \cite[Section 6.1]{Schrijver1986}).
\paragraph{{Continued fraction approximations}}
For {any rational number} $r \in \Q$ we compute the continued fraction
\begin{equation*}
   [r_0;r_1,\ldots,r_t] = r_0 + \frac{1}{r_1 + \frac{1}{r_2 + \frac{1}{r_3 + \ldots}}} {\approx r}
\end{equation*}
and a sequence of \emph{convergents}
$(\frac{p_i}{q_i})_{i =1}^t$ using the following recursive formulas:
\begin{align*}
   \rho_0     & = r                       &
   r_0        & = \down{\rho_0}             \\
   \rho_{i+1} & = \frac{1}{\rho_i - r_i}  &
   r_{i+1}    & = \down{\rho_{i+1}}
   \intertext{for $i=0,1,\ldots$ as long as $\rho_{i}\not\in\Z$, and}
   p_0        & = r_0                     &
   q_0        & = 1                         \\
   p_1        & = r_0r_1+1                &
   q_1        & = r_1                       \\
   p_{i+1}    & = r_{i+1}p_{i} + p_{i-1}  &
   q_{i+1}    & = r_{i+1}q_{i} + q_{i-1}.
\end{align*}

It is known that all convergents $\frac{p_i}{q_i}$ of the continued fraction approximations are \emph{best approximations} in the sense that there exist no better approximations with a smaller denominator than $q_i$. {This has even been proven for a stronger notion of best approximation \cite[Theorem 17]{Khinchin1997}}.
However, if we impose a fixed limit $M$ on the denominator, as is the case here, we are not guaranteed that the best approximation
\[
   \arg\min\big\{ \big|\frac{n}{d} - r\big| : n\in\Z, d =1,2,\ldots,M \big\}
\]
with respect to that limit is a convergent of the continued fraction approximation.

To compute the best approximation in that sense we need to consider so-called \emph{intermediate fractions}.
If $\frac{p_i}{q_i}, \frac{p_{i+1}}{q_{i+1}}$ are consecutive convergents of a continued fraction approximation, then we define the \emph{intermediate fractions}
\begin{align*}
   p_{i+1}^j & = jp_i + p_{i-1} &
   q_{i+1}^j & = jq_i+q_{i-1}
\end{align*}
for $j = 1,\ldots,r_{i+1}$.
We can now state the desired best approximation guarantee \cite[Theorem 15]{Khinchin1997}.

\begin{Lem}
   Given $M > 0$ and $r \in \Q$, the best approximation of $r$ by a rational number with denominator at most $M$is either a convergent or an intermediate fraction of the continued fraction approximation for $r$.
\end{Lem}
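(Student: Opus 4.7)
The plan is to prove the statement by contradiction: assume a fraction $\tfrac{n}{d}$ with $d \le M$ is closer to $r$ than every convergent and every intermediate fraction with denominator at most $M$, and then construct a fraction of denominator at most $d$ that is even closer, contradicting optimality.

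First I would collect the structural facts about continued fractions that are needed. Consecutive convergents satisfy the determinant identity $p_{i+1} q_i - p_i q_{i+1} = (-1)^{i}$, and the denominators $q_i$ form a strictly increasing sequence. From the recursion $p^j_{i+1} = j p_i + p_{i-1}$, $q^j_{i+1} = j q_i + q_{i-1}$ one directly verifies $p^j_{i+1} q_i - p_i q^j_{i+1} = \pm 1$ and $p^{j+1}_{i+1} q^j_{i+1} - p^j_{i+1} q^{j+1}_{i+1} = \pm 1$. A short calculation then shows that the intermediate fractions for $j=0,1,\ldots,r_{i+1}$ interpolate monotonically between $\tfrac{p_{i-1}}{q_{i-1}}$ (at $j=0$) and $\tfrac{p_{i+1}}{q_{i+1}}$ (at $j=r_{i+1}$), all on the same side of $r$ as $\tfrac{p_{i-1}}{q_{i-1}}$.

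Next I would merge the convergents and intermediate fractions into a single list ordered by denominator. The above identities guarantee that any two consecutive entries $\tfrac{a}{b}$ and $\tfrac{c}{e}$ of this list satisfy $|ae-bc|=1$, i.e., they are Stern--Brocot neighbors, and by monotonicity the true value $r$ lies in the closed interval they span. Let $\tfrac{p^\star}{q^\star}$ be the entry of largest denominator with $q^\star \le M$, and let $\tfrac{p^\dagger}{q^\dagger}$ be the next entry (so $q^\dagger > M$). The central claim is that any rational $\tfrac{n}{d}$ with $d \le M$ which is closer to $r$ than $\tfrac{p^\star}{q^\star}$ must lie strictly between $\tfrac{p^\star}{q^\star}$ and $\tfrac{p^\dagger}{q^\dagger}$. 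Writing $\bigl|\tfrac{n}{d} - \tfrac{p^\star}{q^\star}\bigr| = \tfrac{|nq^\star - dp^\star|}{d q^\star} \ge \tfrac{1}{d q^\star}$ and using the neighbor identity $|p^\star q^\dagger - p^\dagger q^\star| = 1$, an elementary comparison of the two interval lengths $\bigl|\tfrac{n}{d}-\tfrac{p^\star}{q^\star}\bigr|$ and $\bigl|\tfrac{n}{d}-\tfrac{p^\dagger}{q^\dagger}\bigr|$ forces $d \ge q^\star + q^\dagger > q^\dagger > M$, contradicting $d \le M$.

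The main obstacle is the bookkeeping in Step~2, ensuring that the merged sequence has the neighbor property uniformly, including at the boundaries between convergents and the following block of intermediate fractions, and verifying that the monotone approach to $r$ alternates correctly between the two sides. Once this combinatorial structure is in place, the final contradiction is a short denominator estimate. The degenerate cases where $r$ itself is rational and the continued fraction terminates, or where $r$ coincides with one of the listed fractions, are handled by terminating the argument at that point since the exact value is trivially the best approximation.
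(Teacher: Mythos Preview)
The paper does not give its own proof of this lemma; it simply cites Khinchin's Theorem~15. Your proposal is an actual proof attempt, but it has a genuine gap.

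The error is the assertion that consecutive entries of the merged-by-denominator list always bracket $r$. Two successive intermediate fractions $p^{j}_{i+1}/q^{j}_{i+1}$ and $p^{j+1}_{i+1}/q^{j+1}_{i+1}$ lie on the \emph{same} side of $r$ (the side of $p_{i-1}/q_{i-1}$), so $r$ is not between them. Consequently your Step~3 is really trying to establish something strictly stronger and false, namely that the last entry $p^\star/q^\star$ with $q^\star\le M$ is itself the best approximation. For $r=355/113$ and $M=8$ the list ends at $p^\star/q^\star=25/8$, with next entry $p^\dagger/q^\dagger=47/15$; yet the convergent $22/7$ is the best approximation ($|r-22/7|=1/791$ versus $|r-25/8|=15/904$), and your bound $d\ge q^\star+q^\dagger=23$ is vacuous because $22/7$ does not lie between $25/8$ and $47/15$. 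The repair is to pair the intermediate $p^{j}_{i+1}/q^{j}_{i+1}$ not with the next intermediate but with the convergent $p_i/q_i$ on the opposite side of $r$; you already verified that these two satisfy $|p^{j}_{i+1}q_i-p_iq^{j}_{i+1}|=1$, and now $r$ genuinely lies between them. Any $n/d$ closer to $r$ than \emph{both} must then lie strictly between them, forcing $d\ge q_i+q^{j}_{i+1}=q^{j+1}_{i+1}>M$ for the maximal admissible $j$. Hence the best approximation with denominator at most $M$ is one of $p_i/q_i$ or $p^{j}_{i+1}/q^{j}_{i+1}$, which is exactly the lemma---and this also explains why the paper needs the separate Lemma~\ref{lem:bestapprox} to decide which of the two it is.
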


\noindent

{We mention two more results from the literature that we will use to make computing the best possible approximation as efficient as possible.
   First, we note that all convergents with even index form a strictly increasing sequence, while the convergents with odd index form a strictly decreasing one \cite[Theorem 5]{Lang1966}, \ie
   \begin{equation*}
      \frac{p_0}{q_0} < \frac{p_2}{q_2} < \ldots < \frac{p_{2k}}{q_{2k}} < \ldots < r < \ldots < \frac{p_{2k+1}}{q_{2k+1}} < \ldots < \frac{p_1}{q_1}.
   \end{equation*}
Furthermore, we know that the intermediate fractions do the same \cite[Theorem 9]{Lang1966}, \ie if $i$ is even, then
   \begin{equation*}
      \frac{p_i}{q_i} < \frac{p_{i+2}^1}{q_{i+2}^1} < \ldots < \frac{p_{i+2}^{r_{i+2}}}{q_{i+2}^{r_{i+2}}} = \frac{p_{i+2}}{q_{i+2}} < \ldots\,.
   \end{equation*}
Using this monotonicity, we can formulate an algorithm to compute the best approximation of a rational number $r$ with denominator at most $M$.}

   \paragraph{{The approximation algorithm}}
{The trivial case {where the exact denominator of $r$ is no larger than} $M$ requires no work. Otherwise, we compute convergents of the continued fraction approximation for $r$ until $q_t > M$ for some $t \in \N$.
   Then, since both $\frac{p_{t-1}}{q_{t-1}}$, as well as $\frac{p_t}{q_t}$ are best approximations with respect to their denominators, we know that the best approximation with respect to $M$ is either $\frac{p_{t-1}}{q_{t-1}}$ or one of the intermediate fractions $\frac{p_t^k}{q_t^k}$ for some $k \in \{1,\ldots,r_t\}$.
   Instead of computing all intermediate fractions and choosing the best one, we know due to monotonicity that the best is the one with the largest possible denominator, \ie with $k = \down{\frac{(M - q_{t-2})}{q_{t-1}}}$.
}
Furthermore, the intermediate fractions that are best approximations are exactly the ones with $j \ge \down{r_{t}/2} + 1$.\footnote{We could not find a proof for this in the literature and have included it in the Appendix A, Lemma~\ref{lem:bestapprox}}
Hence, it is trivial to check whether the intermediate fraction or the last convergent $\frac{p_{t-1}}{q_{t-1}}$ should be chosen.

One last case to consider is when the corresponding variable is only bounded in one direction but lacks either an upper or lower bound. In that case,
we are required to find an approximation that is either not larger or not smaller than the original number $r$. This poses no additional challenge since we know that
\begin{itemize}
   \item convergents with even index are always less than or equal to $r$, and
   \item convergents with odd index are always greater than or equal to $r$,
\end{itemize}
and the same holds for intermediate fractions \cite{Lang1966}. {\wlogU, assume that we are required to compute an approximation that is not larger than $r$. Again, assume that $\frac{p_t}{q_t}$ is the first convergent such that $q_t > M$. If $t$ is odd, then we know immediately that $\frac{p_{t-1}}{q_{t-1}}$ is the best approximation, since all intermediate convergents $\frac{p_t^k}{q_t^k} > r$. If $t$ is even, then we know that $\frac{p_{t-1}}{q_{t-1}} > r$ and should therefore not be considered. Instead, we immediately choose the intermediate fraction $\frac{p_t^k}{q_t^k}$ with $k = \down{\frac{M - q_{t-2}}{q_{t-1}}}$, as described above.}

A compact algorithmic description of the whole procedure for obtaining the best approximation $\frac{p}{q} \leq r$ is presented in Algorithm~\ref{alg:contfrac}.

\begin{algorithm}
   \caption{Approximate $r = \frac{n}{d} \in \Q$ by a rational number with bounded denominator.}
   \label{alg:contfrac}
   \begin{algorithmic}
      \STATE{\textbf{Input}: $r=\frac{n}{d} \in \Q, M \in \Z^+$ }
      \STATE{\textbf{Output}: $p,q \in \Z$ with $\frac{p}{q} = {\argmin}\{|\frac{a}{b} - r| : \frac{a}{b} \leq r, b=1,2,\ldots,M\}$}
      \bindent
      \STATE{Compute $[r_0;r_1,\ldots,r_n]$ and $\frac{p_1}{q_1},\ldots,\frac{p_n}{q_n}$ with $q_{n-1} \le M$, $q_n > M$} \\
      \IF{$n$ odd, \ie $\frac{p_{n-1}}{q_{n-1}} \le r < \frac{p_n}{q_n}$}
      \STATE{$p \gets p_{n-1}$}
      \ELSE
      \STATE{$j = \down{\frac{M-q_{n-2}}{q_{n-1}}}$}
      \STATE{$p \gets jp_{n-1} + p_{n-2}$}
      \STATE{$q \gets jq_{n-1}+q_{n-2}$}
      \ENDIF
      \STATE{\textbf{Return}  $\frac{p}{q}$}
      \eindent
   \end{algorithmic}

\end{algorithm}  
\section{Generating elementary certificates}
\label{sec:veri-cuts}
Although we have proven mathematically that the proposed methods are correct and can therefore be used to solve MIPs exactly, it is unrealistic to offer a formal guarantee of correctness for an implementation of these methods. Due to the high algorithmic complexity and the sheer code size, it is neither easy for a user to verify that the implementation is indeed correct, nor is it feasible to prove correctness formally. A more realistic goal is to provide a certificate of optimality for each individually solved instance that follows a much simpler logic and can be checked and verified independently of the solving algorithm.

For MIPs, the VIPR \cite{VIPR} certificate format provides a standard for verification. It can be viewed as a tree-less encoding of (the leaves of)
a \bandb tree. To guarantee a high degree of confidence,
the format only supports the following three elementary steps:
\begin{itemize}
   \item aggregation of constraints,
   \item disjunction logic,
   \item Chvátal-Gomory cuts, \ie rounding down the \rhs of a constraint if all coefficients and variables are integer.
\end{itemize}
We give a short overview of that certificate language, for a more detailed description, see \cite{VIPR}.
A VIPR certificate consists of the problem statement, the optimal solution and objective value, followed by a derivation section that proves a lower bound or infeasibility. That derivation section is a list of constraints with proofs of their validity.  Bounds on the objective function are also seen as one type of constraint. For an \emph{aggregation}, that proof is a list of multipliers and line indices. For disjunction logic, there are two operations. A new disjunction can be introduced by printing two \emph{assumption} constraints, and then other constraints can use that assumption as part of a derivation. If at some point a constraint holds for both parts of a disjunction, it can be \emph{unsplit} and the disjunction can be discharged. At the end of the checking procedure, the checker needs to ensure that no undischarged assumptions remain. Figure~\ref{fig:viprexample} shows a short example of a VIPR certificate for an infeasible instance.  Note that the assumptions column is not provided in the certificate, but is tracked during verification.

\begin{figure}[tb]
   \begin{center}\small
      \begin{tabular}{l}\hline \\[-1ex]
         \(
         \begin{array}{rrl}
            {\bf Given}                        \\
                & x_1,x_2     & \in \mathbb{Z} \\
            C1: & 2x_1 + 3x_2 & \geq 1         \\
            C2: & 3x_1 - 4x_2 & \leq 2         \\
            C3: & -x_1 + 6x_2 & \leq 3         \\[2ex]
         \end{array}
         \)        \\ \hline\\[-1ex]
         \(
         \begin{array}{rlll}
            {\bf Derived}    &                      & {\bf Reason}                                      & {\bf Assumptions} \\
            A1:              & x_1 \leq 0           & \{ \mbox{assume} \}                                                   \\
            A2:              & x_1 \geq 1           & \{ \mbox{assume}\}                                                    \\
            A3:              & x_2 \leq 0           & \{ \mbox{assume}\}                                                    \\
            C4:              & 0 \geq 1             & \left\{ C1+ (-2)\cdot A1 + (-3)\cdot A3 \right \}
                             & A1, A3                                                                                       \\
            A4:              & x_2 \geq 1           & \{ \mbox{assume}\}                                                    \\
            C5:              & 0 \geq 1             & \left\{ \left(-\frac{1}{3}\right) \cdot C3 +
            \left(-\frac{1}{3}\right)\cdot A1 + 2\cdot A4 \right \}
                             & A1, A4                                                                                       \\
            C6:              & x_2 \geq \frac{1}{4} & \left \{ \left(-\frac{1}{4}\right)\cdot C2
            + \left(\frac{3}{4}\right)\cdot A2 \right \}
                             & A2                                                                                           \\
            C7:              & x_2 \geq 1           & \left \{ \mbox{round up } C6 \right \}            & A2                \\
            C8:              & 0 \geq 1             & \left \{ \left(-\frac{1}{3}\right)\cdot C2
            + (-1)\cdot C3 + \frac{14}{3}\cdot C7
            \right \}        & A2                                                                                           \\
            C9:              & 0 \geq 1             & \left \{ \mbox{unsplit } C4, C5 \mbox{ on }
            A3, A4 \right \} & A1                                                                                           \\
            C10:             & 0 \geq 1             & \left \{ \mbox{unsplit } C8, C9 \mbox{ on }
            A2, A1 \right \}                                                                                                \\[1ex]
         \end{array}
         \)        \\ \hline
      \end{tabular}
   \end{center}
   \caption{Certificate example for an infeasible instance \cite{VIPR}.}
   \label{fig:viprexample}
\end{figure}

For the verification of MIR cuts, let us first consider the theoretical case where all operations (aggregation, rounding, substitution) are carried out in exact arithmetic.
In that case, assuming the aggregation has already been certified, a disjunctive proof can be extracted directly from \cite{MarchandWolsey1998}.

\begin{Lem}
   \label{lem:mir}
   Given the simple case of a two-variable set
   \begin{equation}
      X = \{(w,u) \in \Z \times \R_+ : w - u \le b \},
   \end{equation}
   the MIR cut
   \begin{equation*}
      w - \frac{u}{1-f}\le \down{b}, \text{ with } f := b - \down{b}
   \end{equation*}
   is valid for $X$.
\end{Lem}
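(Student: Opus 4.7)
The plan is to give a disjunctive proof based on the integrality of $w$, which is the standard argument from \cite{MarchandWolsey1998} and translates naturally into the three elementary VIPR steps of aggregation, disjunction (assume/unsplit), and Chvátal-Gomory rounding.

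First I would dispose of the trivial case $f = 0$: here $b = \down{b}$, so the claimed cut coincides (after using $u \ge 0$) with the defining inequality $w - u \le b$, and we are done without any disjunction. From now on assume $0 < f < 1$.

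Next, since $w \in \Z$, exactly one of the two assumptions $w \le \down{b}$ or $w \ge \down{b} + 1$ holds. On the \emph{upper} branch, combining the assumption $w \le \down{b}$ with the bound $u \ge 0$ (scaled by $1/(1-f) > 0$) immediately yields $w - u/(1-f) \le \down{b}$. On the \emph{lower} branch, where $w \ge \down{b}+1$, I would aggregate the original inequality $w - u \le b$ with the assumption $-w \le -(\down{b}+1)$ to obtain $-u \le b - \down{b} - 1 = -(1-f)$, i.e.\ $u \ge 1-f$. Scaling this by $-1/(1-f)$ gives $-u/(1-f) \le -1$, which added to the assumption $w \le $ (its own upper-side counterpart is not needed; only $w - u/(1-f) \le w - 1 \le \down{b}$ matters, using that $w$ being an integer and the already-derived bound $u \ge 1-f$ together with $w - u \le b$ imply $w - u/(1-f) \le \down{b}$). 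Concretely, the certifiable aggregation is
\begin{equation*}
   \bigl(w - u/(1-f)\bigr) \;=\; \tfrac{-f}{1-f}\,w \;+\; \tfrac{1}{1-f}\,(w-u) \;\le\; \tfrac{-f}{1-f}(\down{b}+1) + \tfrac{1}{1-f}\,b \;=\; \down{b},
\end{equation*}
where the last equality is a direct calculation using $b = \down{b} + f$. This single aggregation with multipliers $-f/(1-f)$ on the assumption and $1/(1-f)$ on the defining inequality finishes the lower branch.

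Finally, since the target inequality $w - u/(1-f) \le \down{b}$ has now been derived under both assumptions, the VIPR \textbf{unsplit} step on the disjunction $w \le \down{b}$ versus $w \ge \down{b}+1$ discharges the assumptions and establishes validity on all of $X$. The main obstacle is not conceptual but bookkeeping: choosing the aggregation multipliers in the lower branch so that the resulting \rhs collapses to exactly $\down{b}$, which is the identity $-f(\down{b}+1) + b = (1-f)\down{b}$ noted above; everything else is elementary.
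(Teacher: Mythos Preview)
Your argument is correct and matches the paper's approach exactly: introduce the split $w \le \down{b}$ versus $w \ge \down{b}+1$, aggregate on each branch to obtain the cut, then unsplit. Your explicit multipliers $\tfrac{1}{1-f}$ on $w-u\le b$ and $\tfrac{-f}{1-f}$ on the assumption in the lower branch are the ones that actually yield the cut (the paper prints $\tfrac{-1}{1-f}$ and $\tfrac{f}{1-f}$, which appears to be a transposition); you can simply drop the garbled first pass on that branch and keep the clean ``Concretely'' aggregation.
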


\noindent
The proof introduces a simple split disjunction:
\[
   X^1 = X \cup \{(w,u): w \le \down{b}\}
   \text{ and }
   X^2 = X \cup \{(w,u): w \ge \down{b} + 1\}.
\]
Then validity of the cut for $X^1$ is proven by aggregating
$w \le \down{b}$ and $u \ge 0$ with weights $1$ and $\frac{-1}{1-f}$, respectively.
For $X^2$ the inequalities $w \ge \down{b} + 1$ and $w-u\le b$ are aggregated with coefficients
{$-\frac{f}{1-f}$} and {$\frac{1}{1-f}$}, respectively.

For the multi-variable version needed in practice we have:

\begin{Thm}
   \label{thm:mir}
   Given $a\in\R^n$, $b\in\R$, the single-constraint set
   \begin{equation}
      X = \{(x,y^+,y^-) \in \Z^{|N|}_+ \times \R_+ \times \R_+ : a^T x + y^+ \le b + y^- \},
   \end{equation}
   and let $f = b - \down{b}, f_j = a_j - \down{a_j}$. Then the MIR cut
   \begin{equation}
      \label{eq:mircut}
      \sum_{j \in N}\Big(\down{a_j} + \frac{(f_j -f)^+}{1-f}\Big)x_j - \frac{y^-}{1-f}\le \down{b}
   \end{equation}
   is valid for $X$.
\end{Thm}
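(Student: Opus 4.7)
The plan is to reduce Theorem~\ref{thm:mir} to the single-variable case of Lemma~\ref{lem:mir} by an integer/continuous aggregation trick. First I would partition the index set as $N_1 = \{j \in N : f_j \le f\}$ and $N_2 = \{j \in N : f_j > f\}$, since these are exactly the two regimes in which the MIR coefficient $\down{a_j} + (f_j-f)^+/(1-f)$ behaves differently (equaling $\down{a_j}$ on $N_1$ and $\up{a_j} - (1-f_j)/(1-f)$ on $N_2$). The key algebraic identity is $a_j = \down{a_j} + f_j$ for $j \in N_1$ and $a_j = \up{a_j} - (1-f_j)$ for $j \in N_2$.

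Next I would introduce the two auxiliary variables to match Lemma~\ref{lem:mir}: an integer aggregate
\begin{equation*}
   w \;=\; \sum_{j \in N_1}\down{a_j}\,x_j \;+\; \sum_{j \in N_2}\up{a_j}\,x_j \;\in\; \Z
\end{equation*}
and a non-negative continuous aggregate
\begin{equation*}
   u \;=\; y^- \;+\; \sum_{j \in N_2}(1-f_j)\,x_j \;\in\; \R_+.
\end{equation*}
Substituting $a_j$ via the identities above into the defining inequality of $X$, one obtains $w + \sum_{j \in N_1} f_j x_j - \sum_{j \in N_2}(1-f_j) x_j + y^+ \le b + y^-$, which rearranges to $w - u \le b - \sum_{j \in N_1} f_j x_j - y^+ \le b$, where the last inequality uses $f_j, x_j, y^+ \ge 0$. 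Hence $(w,u) \in X_{\text{two-var}}$ in the sense of Lemma~\ref{lem:mir}.

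Applying that lemma gives $w - u/(1-f) \le \down{b}$. I would then expand $w$ and $u$ back in the original variables and simplify the coefficient of $x_j$ for $j \in N_2$: $\up{a_j} - (1-f_j)/(1-f) = \down{a_j} + (f_j - f)/(1-f) = \down{a_j} + (f_j - f)^+/(1-f)$, while for $j \in N_1$ the coefficient is simply $\down{a_j} = \down{a_j} + (f_j-f)^+/(1-f)$. The $y^-$ term contributes $-y^-/(1-f)$ directly, and $y^+$ does not appear since it was discarded in the nonnegativity step. This yields exactly the MIR cut~\eqref{eq:mircut}.

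The only subtle step is verifying that throwing away $-\sum_{j \in N_1} f_j x_j - y^+$ when passing from the equality to the two-variable relaxation is legitimate, which it is because all those terms are nonnegative and appear with the correct sign on the right-hand side. The rest is bookkeeping: checking that $w \in \Z$ (clear, integer combination of integer $x_j$) and $u \in \R_+$ (clear, nonnegative combination of nonnegatives, plus $y^- \ge 0$), and that the case split $f_j \le f$ versus $f_j > f$ matches the $(\cdot)^+$ operator in the cut formula. No case requires separate argument beyond the two algebraic identities for $a_j$.
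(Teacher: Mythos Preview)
Your proposal is correct and follows essentially the same approach as the paper: the same partition $N_1,N_2$, the same aggregates $w$ and $u$ (note $\up{a_j}-a_j = 1-f_j$ for $j\in N_2$, so your $u$ coincides with the paper's), discarding $y^+$, and reducing to Lemma~\ref{lem:mir}. You simply spell out the algebraic bookkeeping that the paper leaves implicit.
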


\noindent
The proof first defines two index sets $N_1 = \{j : f_j \le f\}$ and $N_2 = \{j : f_j > f\}$. Based on these, we define $w = \sum_{j \in N_1}\down{a_j}x_j + \sum_{j \in N_2} \up{a_j}x_j$, which takes the place of the single integer variable of Lemma $\ref{lem:mir}$.
For the continuous part $u = y^- + \sum_{j \in N_2} (\up{a_j}-a_j)x_j)$ is used.
Variable $y^+$ is discarded.

Therefore, to certify the correctness of inequality \eqref{eq:mircut}, we need to first print the split disjunction for $w$ as well as a proof for $u \ge 0$ to the certificate. Then we aggregate both sides of the disjunction as pointed out in the proof of Lemma~\ref{lem:mir}, unsplit both sides, and have proven validity under the restrictions of VIPR.

In practice, we perform all operations using the safe directed rounding approach discussed in the previous section. This means that we need to account for the rounding errors that were made during the solving process. A derivation for an inequality $a^Tx \le b$ is only accepted if the exact aggregation $\tilde a^Tx \le \tilde b$ \emph{dominates} $a^Tx \le b$, meaning that $a=\tilde a$ and $\tilde b \le b$.
However, $a=\tilde a $ will not necessarily hold when safe rounding was used to derive $a$, since the coefficients were not computed in exact arithmetic.

In order to handle this without creating additional overhead for generating the certificate during the solving process, we extend the certificate language by a notion of \emph{weak domination} as follows.

\begin{Def}
   An inequality $a^Tx \le b$ weakly dominates $a'^Tx \le b'$ with respect to bounds $\ell\leq x \leq u$
   if there exist coefficients $\delta_i^+, \delta_i^- \ge 0$ such that
   \begin{align*}
      a_i + \delta_i^+ - \delta_i^-                                                            & = a_i'  \text{ for all } i=1,\ldots,n, \\
      b + \sum_{i: \delta_i^+ > 0} \delta_i^+ u_i - \sum_{i: \delta_i^- > 0} \delta_i^- \ell_i & \le b'.
   \end{align*}
\end{Def}

\noindent
If a derived inequality is weakly dominating, then it is easy to post-process that derivation into a (strongly) dominating inequality by iterating through the variables, computing the coefficient differences $\delta_i = a_i' - a_i$, and adding them as additional aggregation multipliers with their corresponding bound constraints.

Note that the safe rounding technique uses precisely the ``opposite'' concept: Each arithmetic operation is accompanied by adding an overestimation of the rounding error to the \rhs. Therefore, if we print the proof according to Theorem~\ref{thm:mir} to the certificate using safe rounding, we are guaranteed to obtain a weakly dominating inequality. Figure~\ref{fig:weakexample} shows a toy example of such a weak domination and its completed strong part.

To work with this extended certificate language, we extended the certificate checking software available for VIPR\footnote{The code for VIPR is open-source and available on \cite{VIPRweb}.} by a newly created completion script
\texttt{viprcomp} that converts certificates containing weakly dominating inequalities into classic VIPR certificate files. This has the benefit of not complicating the logic used inside the proof checker itself.  Thus, confidence in the certification process remains as high as before.

Note that in theory it would be possible to perform this completion step already during the solving process, directly inside the MIP solver.
However, in practice this would negate all positive performance impact from using fast safe rounding instead of expensive symbolic computations. Performing the completion \emph{a posteriori} has further advantages. Only selected cuts must be post-processed,
and we know in advance how many cuts need to be certified; these could be post-processed independently in parallel.

One issue that this approach cannot fix is the slight floating-point inaccuracy incurred from the elimination of slack variables as described at the end of Section~\ref{subsec:mir}.
{To illustrate this issue, consider a slack variable $s_i$ that is present in the cut, belonging to some row $a_i^Tx \le b_i$. After safely applying the MIR procedure, that slack variable has some coefficient $r_i$ in the resulting cut. The same procedure applied in exact arithmetic would yield a slightly different coefficient $r_i'$. If we could compute the difference $r_i' -r_i$ in the certificate completion step, we could correct for the rounding errror using the definition of $s_i = b_i - a_i^Tx \ge 0$. However, inside the certificate, we can only observe the coefficients in the finalized cut, where the slack variable has been eliminated. Therefore, we cannot compute the correction factor just from the difference in coefficients since it is not possible to determine which rows contributed how much to the inaccuracy.}

To overcome this issue, we need to actually compute the exact coefficients {$r_i'$} for slack variables inside the solver and print
the difference {$r_i' - r_i$} to the {certificate}.
While this does increase the number of symbolic computations needed during the solve, we can limit it to only those cuts that are selected to enter the LP, and need to perform it only when certificate printing is enabled.

\begin{figure}[tb]
   \begin{center}\small
      \begin{tabular}{ll}\hline
         \(
         \begin{array}{rl}
            {\bf Given}              \\
                & x,y \in \mathbb{Z} \\
            C1: & 3x_1 - 4x_2 \leq 2 \\
            C2: & -x_1 + 6x_2 \leq 3 \\
            B3: & x_2 \le 4          \\
         \end{array}
         \) \\ \hline
         \(
         \begin{array}{rlll}
            {\bf Derived} &                                  & {\bf Reason}                                     & {\bf Comment}                  \\
            C4:           & 4\frac{2}{3}x_2 \le 3\frac{2}{3} & {\frac{1}{3}\cdot C1 + C2}                       & \text{Exact derivation}        \\
            C5:           & 5x_2 \le 5                       & {\frac{1}{3} \cdot C1 + C2}                      & \text{Incomplete}              \\
            C6:           & 5x_2 \le 5                       & {\frac{1}{3}\cdot C1 + C2 + \frac{1}{3}\cdot B3} & \text{Completed version of C5} \\
         \end{array}
         \) \\ \hline
      \end{tabular}
   \end{center}
   \caption{Toy example for weak domination. $C4$ is the actually derived constraint using the multipliers $\frac{1}{3}$, and $1$. Using the same multipliers, we might have obtained (by using some rounding operations) $C5$ instead. The reasoning for $C5$ is incomplete, but it can be converted into a proper derivation by taking into account $B3$.}
   \label{fig:weakexample}
\end{figure}

A different approach to verifying any kind of constraint that can be derived by aggregation is to solve an exact LP in the certificate completion step. Assume we are given an inequality $\alpha^Tx \le \beta$ that should be verified. Given the initial model constraints $Ax \le b$, as well as additional local constraints $A^lx \le b^l$,
\[
   \max\{\, \alpha^Tx : Ax \leq b,\; A^lx \leq b^l \,\}.
\]
If the optimal value of that exact LP is at most $\beta$, we know that $\alpha^Tx \le \beta$ is valid, and we can print a proof by aggregation using the dual multipliers of the exact LP solution. We have extended the \texttt{viprcomp} script to also contain that feature. It is not the default option to complete certificate files for two reasons. Firstly, solving an exact LP for every cut that should be verified is computationally expensive. Secondly, it is not guaranteed that the exact LP solver can prove optimality for each cut, \eg due to numerical troubles, which would render the whole certificate useless. The completion using only the variable bounds, on the other hand, is guaranteed to always work. 
\section{Computational analysis}
\label{sec:comp}

We investigate the performance of the techniques covered in the previous sections in a practical setting. Our goal is to answer three main questions.

\textit{First, how does the separation of safe GMI cuts affect the solving behaviour?}
To answer this question we evaluate safe GMI cuts both with and without rounding to small denominators as proposed in Section~\ref{sec:enc}. Besides comparing the number of solved instances, runtime, and number of \bandb nodes, we measure the \emph{gap closed} both after the root node and after the time limit.

\textit{Second, how does the effect of safe GMI cuts in the exact MIP framework compare to that of non-safe GMI cuts in the floating-point MIP setting?}
To analyze this, we first create a common baseline by disabling all features that do not exist in exact SCIP also in the floating-point version of SCIP. Within that reduced solver, we then perform experiments with and without GMI cuts and compare the results to the results with numerically safe cuts in the exact setting.

\textit{Third, what is the overhead for producing, completing, and verifying certificates?}
Here, we are particularly interested to see how the certification of cuts impacts the overhead beyond the pure branch-and-bound setting.

In total, we compare the following five configurations of \scip:
\begin{itemize}
   \item \nocuts: Exact solving mode with separation disabled.
   \item \safegmi: Exact solving mode with separation enabled, but without rounding to small denominators according to Section~\ref{sec:enc}.
   \item \enc: Exact solving mode with separation enabled \emph{and} size of denominators limited to $2^{17}$; this value proved a good tradeoff between accuracy and speed.
   \item \fpred: floating-point mode with all plugins disabled that are not available in exact solving mode and no separation.
   \item \fpcuts: floating-point mode with all plugins disabled that are not available in exact solving mode and enabled the separation of GMI cuts.
\end{itemize}

\subsection{Computational setup}

All experiments were performed on a cluster of Intel Xeon Gold 5122 CPUs with
3.6 GHz and 96 GB main memory. We use \soplexv \cite{soplex6zen} for solving all LP and exact LP subproblems. For all symbolic computations, we use
the GNU Multiple Precision Library (GMP) 6.1.2 \cite{GMP}. For symbolic presolving, we
use \papilov \cite{Papilo}; all other SCIP presolvers are disabled.

As test set we use the \miplib~2017 benchmark instances; in order to save computational effort, we exclude all those that could not be solved by the floating-point default version of \scipv within two hours. We use three random seeds for the remaining $132$ instances, making the size of our test set $396$. The time limit was set to $7200$ seconds for all experiments. Unless stated otherwise, all averages are reported as shifted geometric means with a shift of $1$ seconds and $100$ nodes, respectively.

\subsection{Dual bound improvement}

First, we analyze the \emph{gap closed} to understand how much the added safe GMI cuts help at improving dual bounds.
Given a reference bound $p$, the dual bound $d_1$ after the first LP solve, and the dual bound $d_2$ after the algorithm terminates, the gap closed is defined as $GC(p, d_1, d_2) = \frac{(d_2 - d_1)}{(p - d_1)}$.
As reference solutions, we used the best-known floating-point feasible solutions to the \miplib~2017 instances. We exclude all infeasible instances, as well as all instances where the floating-point and exact solutions do not agree (\eg because the instance is infeasible in the exact setting, but not in the floating-point setting). The results for the remaining $367$ instances are shown in Table~\ref{tbl:gapclosed}.

We observe almost identical values for the runs with \safegmi and \enc.  \enc closes on average {0.2\% more gap at the root (0.152 vs.~0.150) and 0.5\% more gap after the time limit (0.580 vs.~0.585). This shows that the further weakening of the cuts from that technique is not severe.
   Although this experiment does not present a clear picture as to which setting should be preferred}, we will see a much clearer picture in favor of \enc when looking at runtime experiments in Section~\ref{subsec:runtime}.

When comparing the impact of GMI cuts in the exact and the floating-point setting, we first observe that there is very little difference at the root node: \fpcuts closes only { $0.9\%$ more gap than \enc (0.161 vs.~0.152).
   After time limit, $11\%$ more gap is closed by enabling cuts in the floating-point setting (0.653 vs.~0.543).
In the exact setting $9.5\%$ additional gap is closed by \safegmi (0.580 vs.~0.485) and $10\%$ by \enc (0.585 vs.~0.485).}
All in all, although safely generated MIR cuts are in general weaker than floating-point cuts, our experiments show that their relative impact on the average dual performance of the respective solvers is only minimally reduced.

\begin{table}
   \centering
   \caption{Arithmetic means of gap closed at the root node and after the time limit.}
   \label{tbl:gapclosed}

\begin{tabular}{lrrrrr}
   \toprule
   {}              & \nocuts & \safegmi & \enc  & \fpred & \fpcuts \\
   \midrule
   gap closed root & -       & 0.150    & 0.152 & -      & 0.161   \\
   gap closed tlim & 0.485   & 0.580    & 0.585 & 0.543  & 0.653   \\
   \bottomrule
\end{tabular}
 \end{table}

Since mean values often do not show the complete picture, Figure~\ref{fig:clgaps} provides pairwise comparisons between different settings.
While we can also observe the very similar root node performance for \safegmi and \enc(Figure~\ref{fig:clgaps}a), there exists a significant amount of instances, where either \enc or \safegmi vastly outperforms the other after time limit (Figure~\ref{fig:clgaps}b);
some of these may be due to performance variability, which is more pronounced when comparing runs involving branching.
In Figure~\ref{fig:clgaps}c it is clearly visible that safe GMI cuts help with closing the gap in exact solving mode.
When comparing the impact in the exact and in the floating-point setting (Figure~\ref{fig:clgaps}c vs. Figure~\ref{fig:clgaps}d), we see a slightly more consistent positive impact in the floating-point setting.  The number of instances where enabling cuts leads to smaller gaps closed after time limit is larger in the exact setting.

\begin{figure}
   \centering
   \caption{Comparison of gap closed with different versions of safe GMI cuts.}
   \label{fig:clgaps}
   \includegraphics[width=.49\textwidth]{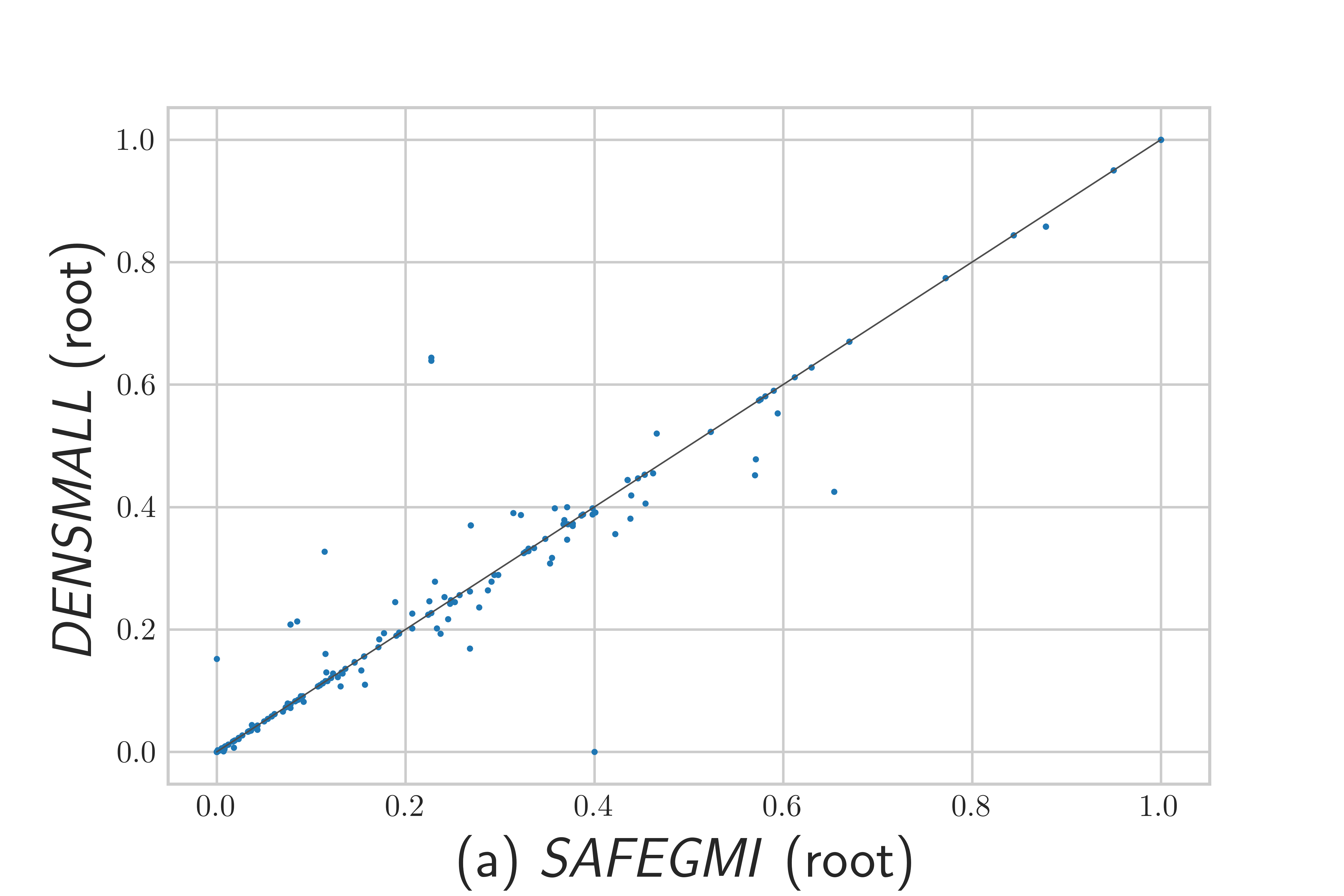}
   \includegraphics[width=.49\textwidth]{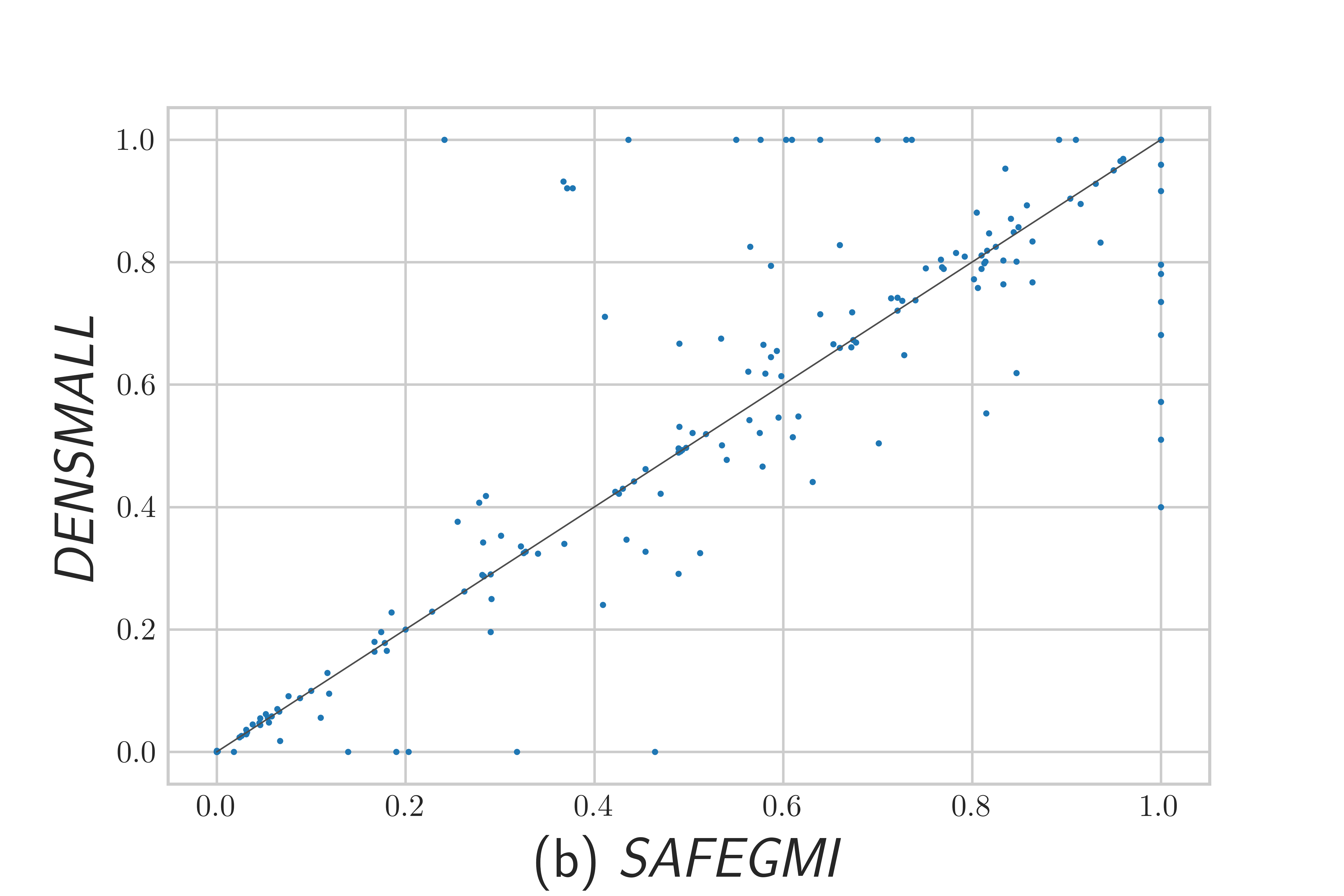} \\
   \includegraphics[width=.49\textwidth]{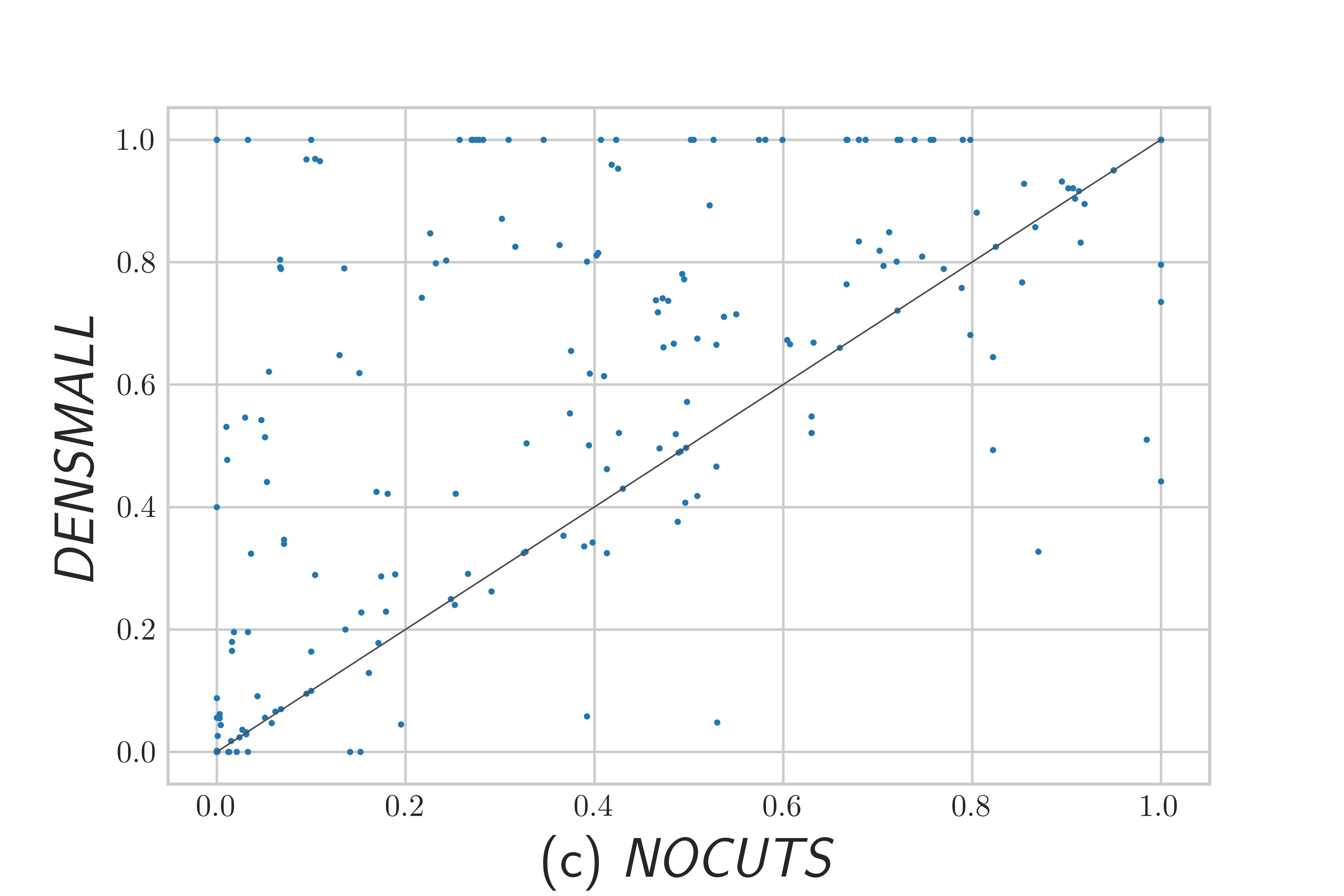}
   \includegraphics[width=.49\textwidth]{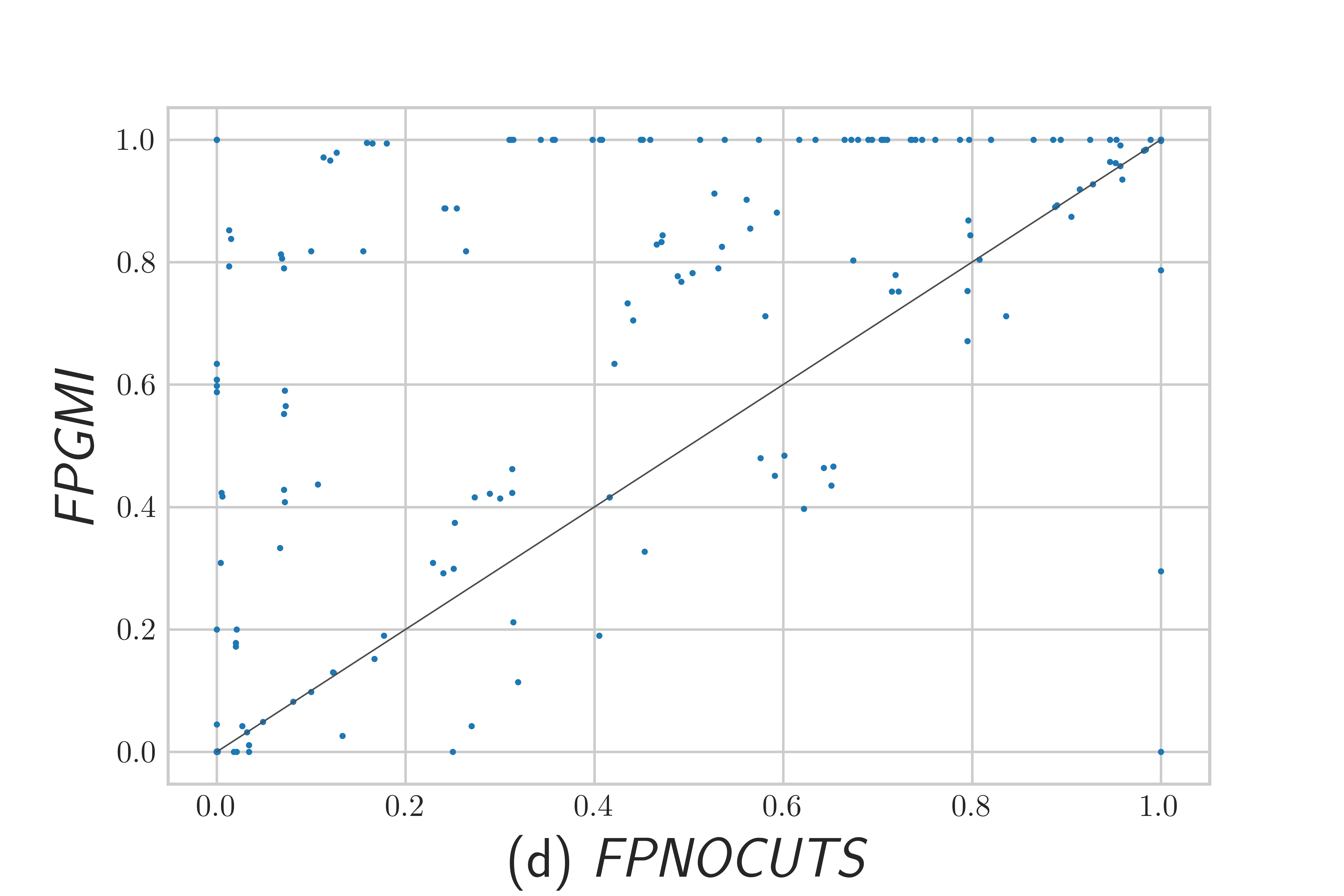}
\end{figure}

\subsection{Runtime experiments}
\label{subsec:runtime}

Table~\ref{tbl:safecuts} reports aggregated results for the number of instances solved, the number of nodes, and the running times for {the three exact settings}. Comparing the \safegmi against the \nocuts setting shows $34$ new instances could be solved, while $12$ could be solved
with \nocuts but not with \safegmi, leading to a total of $22$ more instances solved.

The number of \bandb nodes was reduced by \reduction{986.7}{2011}, and solving time was reduced by \reduction{914.26}{1121.06}. Furthermore, we observe that while the time spent in (floating-point) LP solving expectedly decreases by \reduction{216.8}{324.0} due to the smaller tree sizes, the time spent in exact LP solving increases by \increase{18.7}{7.1}.\

While the overall time spent in exact LP calls is still low, only looking at averages does not tell the full story. For many of the instances that can be solved to optimality by \nocuts but not by \safegmi, the exact LP solving time is the major bottleneck. The issue that arises within the exact LP solver for some of the subproblems is that the numerical accuracy of the underlying double-precision floating-point solver is not high enough, and that after computing the residuals and rescaling inside the iterative refinement procedure, the resulting refinement LP is numerically too difficult to solve.
This results either in large solving time spikes for some exact LPs or even for the exact LP solver to terminate without solving the problem to optimality.

In comparison to that, \enc solves $6$ more instances than \safegmi ($10$ gained, $4$ lost), with a reduction in nodes by \reduction{995.29}{2011}, and a reduction in solving time by \reduction{820.69}{1121.1} when comparing with \nocuts.
The average time spent solving exact LPs still increases by \increase{11.1}{7.1} compared to \nocuts, but most of the bottlenecks in exact LP solving disappear, and thus performance is improved.
This is although the size of the \bandb tree increases slightly compared to \safegmi.

\begin{table}
   \centering
   \caption{Comparison of safe cutting plane variants on all \miplib~2017 benchmark instances that could be solved by at least one solver. {Columns exLP and fpLP show the time spent in exact LP solving and floating-point LP solving, respectively.}}
   \label{tbl:safecuts}

\begin{tabular}{lrrlrlrr}
   \toprule
&        &         &       & \multicolumn{4}{c}{time [s]}                        \\
   \cmidrule(lr){5-8}
   setting  & solved & nodes   & (rel) & total                        & (rel) & exLP & fpLP  \\
\midrule
   \nocuts  & 130    & 20106.5 & 1.00  & 1121.06                      & 1.00  & 7.1  & 324.0 \\
   \safegmi & 152    & 9867.5  & 0.49  & 914.26                       & 0.82  & 18.7 & 216.8 \\
   \enc     & 158    & 9952.9  & 0.5   & 820.69                       & 0.73  & 11.1 & 209.5 \\
   \bottomrule
\end{tabular}

 \end{table}

This brings us to the second question, of how this measures up to the benefit from GMI cuts in the floating-point setting.
The results for \fpred and \fpcuts are presented in Table~\ref{tbl:fpcuts}. Enabling GMI cuts in that setting solves $39$ more instances, reduces the number of nodes by \reduction{12.604}{32.741}, and reduces solving time by \reduction{451.12}{695.29}. We see two main reasons for this slightly better performance in the floating-point case. First, the cuts are weakened by using the safe rounding procedure, both in their construction as well as during post-processing. We believe that this would not pose such a drastic effect in the floating-point setting, where tolerances are used to detect optimality and prune nodes.
However, since both feasibility as well as optimality tolerance is exactly zero in the exact setting, even a slight weakening of cuts may prevent nodes from being pruned or optimality from being detected. We also have to note that performance drastically worsens if the frequency of exact LP calls is increased. In this experiment, we only solve exact LPs when strictly necessary.

\begin{table}
   \centering
   \caption{Impact of GMI cuts in floating-point setting on all \miplib~2017 benchmark instances that could be solved by at least one solver.}
   \label{tbl:fpcuts}
   \begin{tabular}{lrrlrlrr}
   \toprule
           &        &         &       & \multicolumn{3}{c}{time [s]}                 \\
   \cmidrule(lr){5-7}
   setting & solved & nodes   & (rel) & total                        & (rel) & LP    \\
   \midrule
   \fpred  & 164    & 32741.6 & 1.00  & 695.29                       & 1.00  & 378.7 \\
   \fpcuts & 203    & 12604.8 & 0.38  & 451.12                       & 0.65  & 233.1 \\
   \bottomrule
\end{tabular} \end{table}

\subsection{Certificate overhead}

We measure the overhead that is introduced by enabling the verification using VIPR certificates as described in Section~\ref{sec:veri-cuts}. Since the overhead is not subject to performance variability, we only compare for a single seed, and only for our best setting, \enc. To ensure that the same number of instances is solved, we extended the time limit for the run with enabled verification to $6$ hours.
As in \cite{EiflerGleixner2022}, presolving reductions are currently not certified, so we only verify that the presolved problem has been solved correctly.  However, we do check the optimal solution for feasibility in the original problem space.

\scip does not force all generated cuts to enter the LP, but rather adds them to a storage from which efficacious cuts are selected greedily and redundant or near-orthogonal cuts are filtered. Therefore, it would be unnecessary and inefficient to write all generated cuts to the certificate file.
Instead, we only print the verification of a safe GMI cut to the certificate when the corresponding row enters the LP. To make this possible we save the aggregation information, the split information, which variable bounds are used in transformation, and the scaling factor in a hashmap.

We observe an increase of \increase{914.4}{621.5} in MIP solving time, which is very similar to what was measured in \cite{EiflerGleixner2022} for the pure branch-and-bound version.  This shows that the additional verification of cuts does not increase the proportional overhead for printing certificates.

The total overhead during solving, completion, and checking is on average $57.7\%$, which is slightly smaller than the $65.8\%$ reported in \cite{EiflerGleixner2022}, although the extra step of completing the certificates is added. This reduction shows that the effort added for verifying cutting planes is more than compensated by the effort saved due to the smaller tree sizes observed in Section~\ref{subsec:runtime}.

The time spent on completing the weakly dominating inequalities is the smallest fraction of the overhead.  The biggest share stems from bookkeeping and printing of certificates during the MIP solve. All except one instance that could be solved within a time limit of two hours without certification could be solved and successfully certified within $3$ hours; the only outlier was instance \texttt{mas74}, which took almost $8$ hours to solve and verify.

Comparing this with the overhead from other certified algorithms, \eg in recent work on symmetry breaking for pseudo-Boolean solvers \cite{BogaertsGochtMcCreeshNordstrom2022}, shows that the behaviour is quite different. In exact MIP, the overhead for printing is much larger, while the certification overhead is much smaller. The main reason for this difference is that the logic allowed in VIPR certificates is more elementary than in \cite{BogaertsGochtMcCreeshNordstrom2022}, thus putting more strain on the solver to produce certificates that are accepted.

\begin{table}
   \centering
   \caption{Overhead from producing and verifying certificates for \enc on the $49$ \miplib~2017 benchmark instances that could be solved to optimality within two hours without certification.}
   \label{tbl:cert}
   \small{
   \sisetup{round-mode=places,round-precision=1,table-number-alignment=right,table-format=2.1}
   \begin{tabular*}{\textwidth}{l@{\;\extracolsep{\fill}}llll}\toprule
      & \multicolumn{4}{c}{time [s]} \\
      \cmidrule(r){2-5}
      & solving                 & completion & checking & total \\
      \midrule
      Baseline          & 621.5                   & -          &  -       & 621.5 \\
      With verification & 914.4                   & 25.73      & 40.10    & 980.2 \\
      \midrule
      Overhead          & \overhead{914.4}{621.5} & -          & -        & \overhead{980.23}{621.5} \\
      \bottomrule
   \end{tabular*}
}
 \end{table} 

\section{Conclusion}
\label{sec:conc}

In this paper, we adapt the numerically safe Gomory mixed-integer cuts introduced by Cook et al. \cite{CookDashFukasawaGoycoolea2009} to the setting of exact rational mixed-integer programming. We identify exact LP solving as the main difficulty in making safe cutting planes performant, and show ways to overcome this difficulty by post-processing their coefficients. Using these methods, our algorithm is able to solve $21.5\%$ more instances
with a reduction of $26.8 \%$ in solving time on the \miplib~2017 benchmark test set. This is a significant improvement, although it is slightly less than what is achievable in the floating-point setting.

We conjecture that a slight weakening of the cuts from the numerically safe method together with the absence of error tolerances are the reasons for the smaller speedup in the exact setting.

We see several future research opportunities to further improve the performance of safe cutting planes. Extending the iterative refinement procedure of the exact LP solver to be able to perform precision-boosting would help with LPs that become difficult due to the separation of cutting planes. In a more straightforward direction, it might also be possible to tune separation parameters for the exact MIP setting, whereas in our experiments we use default settings of the floating-point solver to make the results as comparable as possible.

On the verification side, we show that the VIPR certificate format can be used to verify the correctness of the safe GMI cuts, without significantly increasing the overall certificate overhead.

All in all, we show that a careful implementation of numerically safe cutting planes can significantly improve the solving behaviour of an exact MIP solver. It should be pointed out that this result was achieved for one of the numerically most challenging class of cutting planes: cuts generated from the simplex tableaux. Hence, we are reasonably confident that there is room for further improvement with the addition of other types of cuts that are arithmetically easier to generate and exhibit nicer numerical properties, such as cover cuts for knapsack constraints \cite{LetchfordSouli2019}, flow covers \cite{Gu1999LiftedCI}, or zero-half cuts \cite{KosterZymolkaKutschka2009}. 
\newpage

\appendix

\section{Proof for intermediate fractions}
We consider a rational number $q$, a bound on the denominator $M > 0$, its continuous fraction
\begin{equation*}
   [r_i;r_0,\ldots,r_n] = r_0 + \frac{1}{r_1 + \frac{1}{r_2 + \frac{1}{r_3 + \ldots}}},
\end{equation*}
convergents $(\frac{p_i}{q_i})_{i=1,\ldots,n}$, and intermediate fractions $(\frac{p_i}{q_i})^j_{i=1,\ldots,n}$. Then the following holds
\begin{Lem}
   \label{lem:bestapprox}
   Exactly the intermediate fractions with $j \ge \down{\frac{r_i}{2}} + 1$ are best approximations for $q$.
\end{Lem}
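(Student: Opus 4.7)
My plan is to pin down the error of an intermediate fraction in closed form, use the Farey-neighbour relation to reduce the best-approximation question to a single comparison with $p_{i-1}/q_{i-1}$, and then translate the resulting inequality on the complete quotient $\alpha_i$ into the stated condition on $j$.

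First, I would use the standard representation $q = (\alpha_i p_{i-1} + p_{i-2})/(\alpha_i q_{i-1} + q_{i-2})$, where $\alpha_i = [r_i; r_{i+1}, \ldots]$ is the $i$-th complete quotient, together with the identity $p_{i-1} q_{i-2} - p_{i-2} q_{i-1} = (-1)^i$. A short calculation yields the closed-form expressions
\[
  \left|q - \frac{p_i^j}{q_i^j}\right| = \frac{\alpha_i - j}{q_i^j\,(\alpha_i q_{i-1} + q_{i-2})}
  \quad\text{and}\quad
  \left|q - \frac{p_{i-1}}{q_{i-1}}\right| = \frac{1}{q_{i-1}(\alpha_i q_{i-1} + q_{i-2})}.
\]

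Second, the same kind of direct expansion gives $p_i^j q_{i-1} - p_{i-1} q_i^j = (-1)^i$, so $p_{i-1}/q_{i-1}$ and $p_i^j/q_i^j$ are Farey neighbours, and the error signs force $q$ to lie strictly between them. By the Farey mediant property, any rational strictly inside the open interval bounded by two Farey neighbours has denominator at least the sum of theirs, hence $\ge q_{i-1} + q_i^j > q_i^j$. Therefore any $p/Q$ with $Q \le q_i^j$ that is strictly closer to $q$ than $p_i^j/q_i^j$ must lie outside the open interval $(p_{i-1}/q_{i-1},\, p_i^j/q_i^j)$, and in particular cannot beat $p_{i-1}/q_{i-1}$. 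So $p_i^j/q_i^j$ is a best approximation iff it is strictly closer to $q$ than $p_{i-1}/q_{i-1}$.

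Third, substituting the closed-form errors into this single comparison and clearing denominators, I obtain the algebraic condition $j > \alpha_i/2 - q_{i-2}/(2 q_{i-1})$. Writing $\alpha_i = r_i + 1/\alpha_{i+1}$ with $\alpha_{i+1} > 1$ and using the elementary bounds $0 \le q_{i-2}/q_{i-1} < 1$, a short case analysis on the parity of $r_i$ then shows that the integer solutions are precisely $j \ge \lfloor r_i/2 \rfloor + 1$. The main obstacle will be the boundary case $r_i$ even with $j = r_i/2$, where the criterion rearranges to a comparison of the tail $1/\alpha_{i+1}$ with the reversed ratio $q_{i-2}/q_{i-1}$; nailing this sub-case cleanly requires invoking the standard uniqueness convention that the last partial quotient of a finite continued fraction is at least $2$ (so that $\alpha_{i+1} > 1$ holds strictly) and being careful about the (strict vs.~non-strict) sense of ``best approximation''. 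All other $(r_i, j)$ pairs reduce immediately to the bounds above.
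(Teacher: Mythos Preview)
Your approach differs from the paper's and is, up to the last step, more transparent. The paper works with the two-sided bound $\frac{1}{q_i(q_{i+1}+q_i)} < \big|\frac{p_i}{q_i} - r\big| \le \frac{1}{q_i q_{i+1}}$ from Khinchin together with the exact identity $\big|\frac{p_{i+1}^j}{q_{i+1}^j} - \frac{p_i}{q_i}\big| = \frac{1}{q_i q_{i+1}^j}$, and then checks the two boundary values of $j$ by chaining inequalities. You instead compute the error exactly via the complete quotient and reduce the whole question, through a clean Farey-neighbour argument, to the single inequality $j > \alpha_i/2 - q_{i-2}/(2q_{i-1})$. Your Steps~1--3 are correct, and this criterion is sharper than what the paper's inequality chain can deliver.

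The gap is in how you propose to close the even boundary case. When $r_i = 2m$ and $j = m$, your criterion reads $1/\alpha_{i+1} < q_{i-2}/q_{i-1}$, and this comparison can genuinely go either way; the uniqueness convention only gives $1/\alpha_{i+1} < 1$, which is useless since $q_{i-2}/q_{i-1}$ can be arbitrarily small. Concretely, take $q = [0;2,1000] = 1000/2001$: here $r_2 = 1000$, the semiconvergent at $j = 500 = r_2/2$ is $500/1001$, and one checks $\lvert 500/1001 - q\rvert = 500/(1001\cdot 2001) < 1/(2\cdot 2001) = \lvert 1/2 - q\rvert$, so $500/1001$ \emph{is} a best approximation even though $j < \lfloor r_2/2\rfloor + 1$. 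Hence the word ``exactly'' in the statement is too strong at this boundary, and your exact inequality $j > \alpha_i/2 - q_{i-2}/(2q_{i-1})$ is the correct replacement. (The paper's chain for the direction ``$j = \lfloor r_{i+1}/2\rfloor$ is not a best approximation'' fails at the same point: the step that drops the positive term $q_{i+1}q_{i-1}$ from a denominator reverses the sense of the inequality, so neither argument can go through here.)
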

\begin{proof}
   Due to the monotonicity of intermediate fractions, we only need to prove that $\frac{p_{i+1}^j}{q_{i+1}^j}$ is a best approximation for $j= \down{\frac{r_i}{2}} + 1$, and that it is not a best approximation for $\down{\frac{r_i}{2}}$.
   Furthermore, we only need to prove that  $\frac{p_{i+1}^j}{q_{i+1}^j}$ is a better approximation than $\frac{p_i}{q_i}$, \ie that
   \begin{equation*}
      \Abs{\frac{p_{i+1}^j}{q_{i+1}^j} - r} < \Abs{\frac{p_i}{q_i} - r}.
   \end{equation*}
   We know by (\cite[Theorem 9]{Khinchin1997} and \cite[Theorem 13]{Khinchin1997} that
   \begin{equation}
      \label{eq:bound}
      \frac{1}{q_i(q_{i+1}+q_{i})} < \Abs{\frac{p_i}{q_i} - r} \le \frac{1}{q_iq_{i+1}}.
   \end{equation}
   Since $\frac{p_{i+1}^j}{q_{i+1}^j}$ and $\frac{p_i}{q_i}$ lie on opposite sides of $r$, it holds that
   \begin{equation}
      \label{eq:abs}
      \Abs{\frac{p_{i+1}^j}{q_{i+1}^j} - r} = \Abs{\frac{p_{i+1}^j}{q_{i+1}^j} - \frac{p_i}{q_i}} - \Abs{\frac{p_i}{q_i} - r},
   \end{equation}
   and
   \begin{align*}
      \Abs{\frac{p_{i+1}^j}{q_{i+1}^j} - \frac{p_i}{q_i}}  = \Abs{\frac{q_i(jp_i+p_{i-1})-p_i(jq_i+q_{i-1})}{q_iq_{i+1}^j}} = \Abs{\frac{q_ip_{i-1}-p_iq_{i-1}}{q_iq_{i+1}^j}}.
   \end{align*}

   Iteratively applying the definitions of $p_i,q_i$, it is easy to see that $\Abs{q_ip_{i-1}-p_iq_{i-1}} = 1$, so we have shown
   \begin{equation}
      \Abs{\frac{p_{i+1}^j}{q_{i+1}^j} - \frac{p_i}{q_i}} = \frac{1}{q_iq_{i+1}^j}
   \end{equation}
   Using this with \eqref{eq:abs} and \eqref{eq:bound} yields
   \begin{align*}
      \Abs{\frac{p_{i+1}^j}{q_{i+1}^j} - r} & < \frac{1}{q_iq_{i+1}^j} - \frac{1}{q_i(q_{i+1}+q_{i})} = \frac{q_{i+1}+q_{i} - q_{i+1}^j}{q_iq_{i+1}^j(q_{i+1}+q_{i})} \\
                                            & = \frac{r_{i+1}q_i+q_{i-1}+q_{i} - (jq_i+q_{i-1})}{q_iq_{i+1}^j(q_{i+1}+q_{i})}
      = \frac{q^{1+r_{i+1}-j}_{i+1}-q_{i-1}}{q_iq_{i+1}^j(q_{i+1}+q_{i})}                                                                                             \\
                                            & < \frac{q^{1+r_{i+1}-j}_{i+1}}{q_iq_{i+1}^j(q_{i+1}+q_{i})}
   \end{align*}
   Setting $j = \down{r_{i+1}/2} + 1$ yields
   \begin{align*}
      \Abs{\frac{p_{i+1}^j}{q_{i+1}^j} - r} & < \frac{q_{i+1}^{\up{r_{i+1}/2}}}{q_iq_{i+1}^{\down{r_{i+1}/2}+1}(q_{i+1}+q_{i})} \\ & \le \frac{1}{q_i(q_{i+1}+q_{i})} \le \Abs{\frac{p_i}{q_i} - r}
   \end{align*}
   It still remains to show $\frac{p_{i+1}^j}{q_{i+1}^j}$ is not a best approximation for $j = \down{\frac{r_{i+1}}{2}}$.
   The idea is the same, we just use the other direction of \eqref{eq:bound}.
   \begin{align*}
      \Abs{\frac{p_{i+1}^j}{q_{i+1}^j} - r} & \ge \frac{1}{q_iq_{i+1}^j} - \frac{1}{q_iq_{i+1}} = \frac{(r_{i+1}-j)q_i}{q_iq_{i+1}^jq_{i+1}}=\frac{\up{\frac{r_{i+1}}{2}}}{(\down{\frac{r_{i+1}}{2}})q_{i+1}q_i+q_{i+1}q_{i-1}} \\
                                            & \ge \frac{\up{\frac{r_{i+1}}{2}}}{(\down{\frac{r_{i+1}}{2}})q_{i+1}q_i}  \ge \frac{1}{q_{i+1}q_i} \ge  \Abs{\frac{p_i}{q_i} - r}
   \end{align*}
\end{proof}

\section*{Acknowledgments}
We would like to thank Fabian Frickenstein for his work on the VIPR checking and completion

\bibliographystyle{abbrv}
\bibliography{status-report}

\end{document}